\documentclass{amsproc}
\usepackage{amsmath}
\usepackage{amsfonts}
\usepackage{amssymb}
\usepackage{hyperref}
\hypersetup{colorlinks,
citecolor=black,
filecolor=black,
linkcolor=black,
urlcolor=black}
\usepackage{graphics,epstopdf}
\usepackage[pdftex]{graphicx}
\usepackage{newlfont}\newlength{\defbaselineskip}
\usepackage[left=1in,right=1in,top=1in,bottom=0.8in,footskip=0.25in]{geometry}
\setlength{\defbaselineskip}{\baselineskip}
\usepackage{setspace}
\ExecuteOptions{dvipsone}
\setcounter{page}{1}
\allowdisplaybreaks
\newtheorem{theorem}{Theorem}[section]
\newtheorem{example}{Example}[section]
\newtheorem{lemma}{Lemma}[section]
\newtheorem{definition}[theorem]{Definition}
\newtheorem{remark}{Remark}[section]

\numberwithin{equation}{section}

\usepackage[table]{xcolor}
\setcounter{page}{1}
\newtheorem{corollary}{Corollary}[section]

\usepackage{cite}

\usepackage{fancyhdr}
\usepackage{graphics}
\usepackage{color}
\usepackage{rotating}
\usepackage{fancybox}
\begin{document}
\title{Results on bivariate Sz$\acute{\text{a}}$sz-Mirakjan type operators in  polynomial weight spaces}
\maketitle
\begin{center}
{\bf Rishikesh Yadav$^{1,\dag}$, Ramakanta Meher$^{1,\star}$, Vishnu Narayan Mishra$^{2,\circledast}$}\\
$^{1}$Applied Mathematics and Humanities Department,
Sardar Vallabhbhai National Institute of Technology Surat, Surat-395 007 (Gujarat), India.\\
$^{2}$Department of Mathematics, Indira Gandhi National Tribal University, Lalpur, Amarkantak-484 887, Anuppur, Madhya Pradesh, India\\
\end{center}
\begin{center}
 $^\dag$rishikesh2506@gmail.com,  $^\star$meher\_ramakanta@yahoo.com, $^\circledast$vishnunarayanmishra@gmail.com
\end{center}
\vskip0.5in
\begin{abstract}
In this paper, bivariate Sz$\acute{\text{a}}$sz-Mirakjan type operators are introduced along with the estimation of its approximation properties and its rate of convergence. Furthermore, to check the asymptotic behavior of the said bivariate operators, Voronovskaya type theorem is proved and determined the simultaneous approximation of the operators for first order partial derivative. Finally, the convergence behavior of the bivariate operators are obtained through graphical representation and validated the numerical results with the results of Sz$\acute{\text{a}}$sz-Mirakjan operators for the function of two variables.


\end{abstract}

\subjclass MSC 2010: {41A25, 41A36}

\textbf{Keyword:} Polynomial weight function, modulus of continuity, rate of convergence, simultaneous approximation. 
\section{Introduction}\label{se1}
It is easier to compute the function from possibly complected function and it work out on fundamental problems of approximation theory. In 1885, first of all, Weierstrass obtained the significant result of algebraic polynomials for the class of continuous real valued function defined on closed interval. This theorem put a substantial influence in theory of approximation, functional analysis and other parts of mathematics. Picard, Fejer, Landau, de la Vallee, Poussin proved the Weierstrass's theorem.

In the theory of approximation, there were many authors, who introduced various operators to approximate the function easier and nowadays, these process are running on and modifications are taking place for better approximations. Also the various approximations properties, applications have been discussed and applications in other research are being determined. These types of operators are defined on positive  (finite or infinite) interval. Many papers have literature which explains the approximation properties of the corresponding operators for the function of one variable. 

First of all, in 1912, Bernstein \cite{SN1} proposed operators and which are defined on $[0,1]$ for the function of one variable with single parameter $n\in\mathbb{N}$ also in the same direction its generalization is also seen in various papers as Sz$\acute{\text{a}}$sz \cite{OS1}, some modification can be seen in \cite{WM1}, Cheney and Sharma \cite{EC1} generalized the   Bernstein polynomials and further an extension took place in the paper \cite{DS1}. As an extension to infinite interval, many researcher modified and generalized the Sz$\acute{\text{a}}$sz-Mirakjan operators \cite{GM1}. Rempulska and Walczak \cite{LR} developed the rate of convergence and asymptotic behavior is studied by Abel et al. \cite{UAM}. A relation of the Sz$\acute{\text{a}}$sz-Mirakjan operators  by local approximation and local smoothness of function is studied by Jain \cite{GC} in 1972. A further extension is seen in many papers regarding $L_p$ spaces and function belonging to the Integrable spaces.   In \cite{SU}, the authors extended the Jain operators into Kantorovich variant and also to study more approximation properties in another generalization form, Tarabie \cite{ST} studied the Durrmeyer type generalization of the Jain operators. This type of generalization can also be seen in papers \cite{VMP1,PVN1,OA}. Most of the papers are also cited in which regarding convergence properties are studied \cite{SMV,DZ,VT,VNR2,VNR1,TH1,BGJ1,BGJ2,MB2,MB1,TAL,TA1}.

%
%
%

But all above-mentioned papers represent approximation properties for the single variable defined on ($[b,c]$ or $[b,\infty),~b\geq0$).
But to study approximation properties of more than one variable, many researchers generalized  the operators for the function of two variables with the single or two parameters.

Favard \cite{JF} introduced bivariate Sz$\acute{\text{a}}$sz-Mirakjan operators in 1944 and in 1998
Rempulska and Skorupka \cite{LRM3}, considered Sz$\acute{\text{a}}$sz-Mirakjan operators in polynomial weight space for the function of two variables, in addition, they proved the simultaneous approximation theorem as well as studied the Voronowskaya type theorem. The proposed operators for the function of two variables are as follows:
\begin{eqnarray}\label{S}
S_{m,n}(f;x,y)&=& \sum\limits_{k_1=0}^{\infty}\sum\limits_{k_2=0}^{\infty}e^{-mx-ny}\frac{(mx)^{k_1}(ny)^{k_2}}{k_1!k_2!} f\left(\frac{k_1}{m},\frac{k_2}{n} \right),
\end{eqnarray}   
$m,n\in\mathbb{N},~~x,y\geq 0$. 

The above operators were examined by Totik \cite{VT}, for the function of two variables defined and continuous on $[0,\infty)\times[0,\infty)$.
In paper \cite{FDK}, the authors constructed a new type of Sz$\acute{\text{a}}$sz-Mirakjan operators for function of two variables which preserve $x^2+y^2$ and proved that this type of operators have the better rate of convergence in given interval $[0,\infty)\times[0,\infty)$ than classical Sz$\acute{\text{a}}$sz-Mirakjan operators for the function of two variables and also studied the statistical convergence of the sequence of the modified Sz$\acute{\text{a}}$sz-Mirakjan type operators.

To study the behavior of the operators for the function of two variables, many researchers developed their views and modified characteristics of the operators, some are as \cite{MG1,SD1,NI1,MS1,LR}.

%


Motivated by the above works, we introduce the Sz$\acute{\text{a}}$sz-Mirakjan type operators for the function of two variables in the polynomial weight space, which are extension of the defined operators for the single variable by Mishra and Yadav \cite{RYVN3}, 
\begin{eqnarray}\label{RV}
\Hat{\mathcal{R}}_{m,a}(f;x)&=& \sum\limits_{k=0}^{\infty}s_{m}^a(x) ~f\left(\frac{k}{m} \right),
\end{eqnarray}
where
$s_{m}^a(x)=a^{\left(\frac{-x}{-1+a^{\frac{1}{m}}}\right)}\frac{x^{k}(\log{a})^{k}}{(-1+a^{\frac{1}{m}})^{k}k!}$,
 $m\in\mathbb{N}$, $x\in X$ and $a>1$(fixed). They studied approximation properties, uniform convergence of the operators
by using some auxiliary result and also error estimation is given. The convergence of said operators are shown and analyzed by graphics, also in same direction, it is found better rate of convergence than Sz$\acute{\text{a}}$sz-Mirakjan operators by analyzing the graphics.
Voronovskaya-type theorem etc. is determined for asymptotic behavior of the operators. They obtained better rate of convergence than classical Sz$\acute{\text{a}}$sz-Mirakjan operators under some certain conditions such as generalized convexity and also by graphics.

\begin{remark}
Bivariate operators will be defined in next section.
\end{remark}

\section{Construction of the operators and preliminaries}\label{se2}
Through out the paper, it will be used polynomial weight space and for univariate operators we define the weight function as:
\begin{definition}
Consider the space $C_N$ defined using weight $w_N$, $N\in\mathbb{N}$ as follows:
\begin{eqnarray*}
w_0=1, && w_N(x)=(1+x^N)^{-1},~~ (x\in [0,\infty),~N\in\mathbb{N})
\end{eqnarray*}
\begin{eqnarray*}
C_N=\{f\in C[0,\infty): w_Nf~ uniformly~ continuous~ and~ bounded~ on~ [0,\infty)\},
\end{eqnarray*}
endowed with supremum norm
\begin{eqnarray*}
\|f\|_N=\underset{x\geq0}\sup~ w_N|f(x)|.
\end{eqnarray*}
\end{definition}
But for studying the approximation properties of the bivarate operators , it is defined analogy weight function for two parameters. Since the weight function put a influence in the original result of the problem, that's why we use and in fact will get the rate of convergence.

So, next for fixed $N_1,N_2,\in\mathbb{N}\cup\{0\}$, it is defined the weighted function for two parameter $N_1,N_2\in\mathbb{N}$ for function two variables  as:
\begin{eqnarray}
w_{{N_1},{N_2}}=w_{N_1}w_{N_2}, 
\end{eqnarray}
and the polynomial weight space $C_{{N_1},{N_2}}$ is the set of all continuous functions for which  $fw_{{N_1},{N_2}}$ is uniformly continuous and bounded on  $X=[0,\infty)\times[0,\infty)$ endowed with supremum norm as:
\begin{eqnarray}\label{1}
\|f\|_{{N_1},{N_2}}=\underset{x,y\geq0}\sup ~w_{{N_1},{N_2}}(x,y)|f(x,y)|,~~~(t,s)\in X
\end{eqnarray}
Also an inequality can be defined by using mean value theorem:
\begin{eqnarray}
\left|\int\limits_x^t\frac{du}{w_{{N_1},{N_2}}(u,z)}\right|\leq|t-x|\left(\frac{1}{w_{{N_1},{N_2}}(x,z)}+\frac{1}{w_{{N_1},{N_2}}(t,z)} \right).
\end{eqnarray}
Since, by using first mean value theorem, there exist a point $\zeta_{x,t}$ between $x$ and $t$ such that
\begin{eqnarray*}
\left|\int\limits_x^t\frac{du}{w_{{N_1},{N_2}}(u,z)}\right| &=& |t-x| (1+\zeta_{x,t}^{N_1})(1+z^{N_2})\\&\leq & |t-x|(1+x^{N_1}+1+t^{N_1})(1+z^{N_2})\\ &=& \left(\frac{1}{w_{{N_1},{N_2}}(x,z)}+\frac{1}{w_{{N_1},{N_2}}(t,z)} \right).
\end{eqnarray*}
\textbf{Note}: Moreover, 
$C_{N_1,N_2}^d=\{ f\in C_{N_1,N_2}:\frac{\partial^c }{\partial^ix \partial^{c-i}y }\in C_{N_1,N_2},i=1\dots c, c=1\dots d\}$.
\begin{definition}
Let $f\in C_{N_1,N_2}(X)$, the modulus of continuity is defined by 
\begin{eqnarray}\label{D1}
\omega(f,C_{N_1,N_2};\delta_1,\delta_2)=\underset{\underset{0\leq h_2\leq\delta_2}{0\leq h_1\leq\delta_1}}\sup\| \Delta_{h_1,h_2}f\|_{N_1,N_2},~~~\delta_1,\delta_2>0,
\end{eqnarray}
where $\Delta_{h_1,h_2}=f(x+h_1,y+h_2)-f(x,y)$.
\end{definition}
\begin{definition}\label{def1}
For a given $N_1,N_2\in\mathbb{N}\cup\{0\}$ and $\alpha>0, \beta\geq 1$, we have 
\begin{eqnarray*}
Lip\left(C_{N_1,N_2};\alpha,\beta \right)=\left\{f\in C_{N_1,N_2}:\omega(f,C_{N_1,N_2};\delta_1,\delta_2)=O\left(\delta_1^{\alpha}+\delta_2^{\beta}\right)~\text{as}~\delta_1,\delta_2\to 0^+  \right\}.
\end{eqnarray*} 
\begin{remark}
In particular, here  
\begin{eqnarray*}
C_{N_1,N_2}^1=\left\{f\in C_{N_1,N_2}:\frac{\partial f}{\partial x}, \frac{\partial f}{\partial y}\in C_{N_1,N_2} \right\}.
\end{eqnarray*}
\end{remark}
\end{definition}


Now, we define bivariate Sz$\acute{\text{a}}$sz-Mirakjan type operators for function of two variables $f(x,y)\in C_{N_1,N_2}$, $N_1,N_2>0$ with  $x,y\in X$ which are bivariate extension of the define operators (\ref{RV}) and thus

\begin{eqnarray}\label{R1}
\Hat{Y}_{m,n,a}(f;x,y)&=& \sum\limits_{k_1=0}^{\infty}\sum\limits_{k_2=0}^{\infty}s_{m,n}^a(x,y) ~f\left(\frac{k_1}{m},\frac{k_2}{n} \right),
\end{eqnarray}
where
$s_{m,n}^a(x,y)=a^{\left(\frac{-x}{-1+a^{\frac{1}{m}}}\right)}a^{\left(\frac{-y}{-1+a^{\frac{1}{n}}}\right)}\frac{x^{k_1}y^{k_2}(\log{a})^{k_1+k_2}}{(-1+a^{\frac{1}{m}})^{k_1}(-1+a^{\frac{1}{n}})^{k_2}k_1!k_2!}$,
 $m,n\in\mathbb{N}$, $(x,y)\in X$ and $a>1$(fixed). 
 

 
 \begin{remark}
 For all $x,y\in X$, we have 
 \begin{eqnarray}
\sum\limits_{k_1=0}^{\infty}\sum\limits_{k_2=0}^{\infty}s_{m,n}^a(x,y)=1.
 \end{eqnarray}
 \end{remark}
\begin{remark}
If $f(x,y)=f_1(x)f_2(y)$, where $f\in C_{N_1,N_2}(X)$ then 
\begin{eqnarray}\label{2}
\Hat{Y}_{m,n,a}(f(t,s);x,y)=\Hat{\mathcal{R}}_{m,a}(f_1(t);x)\times \Hat{\mathcal{R}}_{n,a}(f_2(s);x),
\end{eqnarray}
where $\Hat{\mathcal{R}}_{m,a}(f_1(t);x)$ and $\Hat{\mathcal{R}}_{n,a}(f_2(s);x)$ can be obtained by defined operators (\ref{RV}).
\end{remark}

Now we shall give some results which are used to prove our main theorems.

\begin{lemma}\label{L1} Let $x,y\geq 0$ and for each $m,n\in \mathbb{N}$, it holds
\begin{eqnarray}
\Hat{Y}_{m,n,a}(e_{10};x,y)&=& \frac{x \log (a)}{m \left(a^{\frac{1}{m}}-1\right)},\\
\Hat{Y}_{m,n,a}(e_{01};x,y)&=& \frac{y \log (a)}{n \left(a^{\frac{1}{n}}-1\right)},\\
\Hat{Y}_{m,n,a}(e_{20};x,y)&=& \frac{x \log (a) \left(a^{\frac{1}{m}}+x \log (a)-1\right)}{m^2 \left(a^{\frac{1}{m}}-1\right)^2},\\
\Hat{Y}_{m,n,a}(e_{02};x,y)&=& \frac{y \log (a) \left(a^{\frac{1}{n}}+y \log (a)-1\right)}{n^2 \left(a^{\frac{1}{n}}-1\right)^2}.
\end{eqnarray}
\end{lemma}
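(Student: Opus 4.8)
The plan is to exploit the tensor-product (separable) structure of the operator recorded in the preceding Remark, together with the normalization $\sum_{k_1,k_2} s_{m,n}^a(x,y)=1$. Since each monomial $e_{ij}(t,s)=t^i s^j$ factors as a function of $t$ times a function of $s$, equation (\ref{2}) gives
\begin{equation*}
\Hat{Y}_{m,n,a}(e_{ij};x,y)=\Hat{\mathcal{R}}_{m,a}(t^i;x)\times\Hat{\mathcal{R}}_{n,a}(s^j;y),
\end{equation*}
so each of the four claimed identities reduces to a product of one-dimensional moments. Because $\Hat{\mathcal{R}}_{n,a}(1;y)=1$ by the normalization remark, the first statement is simply $\Hat{Y}_{m,n,a}(e_{10};x,y)=\Hat{\mathcal{R}}_{m,a}(t;x)$, and likewise $e_{20}$ requires only $\Hat{\mathcal{R}}_{m,a}(t^2;x)$; the two $y$-statements follow by the symmetric computation in the second variable.

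The crucial observation is that the univariate basis function is Poissonian in disguise. Writing $\lambda_m(x)=\dfrac{x\log a}{a^{1/m}-1}$ and using $a^z=e^{z\log a}$, one checks directly that
\begin{equation*}
s_m^a(x)=a^{\left(\frac{-x}{-1+a^{1/m}}\right)}\frac{x^k(\log a)^k}{(-1+a^{1/m})^k k!}=e^{-\lambda_m(x)}\frac{\lambda_m(x)^k}{k!}.
\end{equation*}
Hence $\Hat{\mathcal{R}}_{m,a}(t^j;x)=\sum_{k=0}^{\infty} e^{-\lambda_m(x)}\dfrac{\lambda_m(x)^k}{k!}\left(\dfrac{k}{m}\right)^j$ is precisely the $j$-th moment of a Poisson law of parameter $\lambda_m(x)$, rescaled by $m^{-j}$. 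This recognition is what makes the awkward prefactor $a^{-x/(-1+a^{1/m})}$ collapse into the clean exponential $e^{-\lambda_m(x)}$.

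From here I would evaluate the three needed sums by the standard Poisson identities, namely $\sum_k e^{-\lambda}\lambda^k/k!=1$, $\sum_k e^{-\lambda}\lambda^k/k!\,k=\lambda$, and $\sum_k e^{-\lambda}\lambda^k/k!\,k^2=\lambda+\lambda^2$, each obtained by an index shift $k\mapsto k-1$ (or, equivalently, by differentiating $e^{\lambda}=\sum_k \lambda^k/k!$ in $\lambda$); absolute convergence of the series justifies these manipulations. This yields $\Hat{\mathcal{R}}_{m,a}(t;x)=\lambda_m(x)/m$ and $\Hat{\mathcal{R}}_{m,a}(t^2;x)=\bigl(\lambda_m(x)+\lambda_m(x)^2\bigr)/m^2$. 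Substituting $\lambda_m(x)=\frac{x\log a}{a^{1/m}-1}$ and clearing the common denominator $(a^{1/m}-1)^2$ recasts $\lambda_m(x)+\lambda_m(x)^2$ as $\frac{x\log a\,(a^{1/m}+x\log a-1)}{(a^{1/m}-1)^2}$, which reproduces exactly the stated right-hand sides; the analogous computation in $y$ finishes the proof.

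The only genuine obstacle is bookkeeping rather than any conceptual difficulty: one must spot the Poisson normalization so that the exponential prefactor is identified correctly, and then carry out the algebraic simplification of $\lambda_m(x)+\lambda_m(x)^2$ into its compact form. Once the separable structure and the Poisson identity are in place, the remainder is routine term-by-term summation.
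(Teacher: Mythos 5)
Your proof is correct: the tensor-product factorization from the Remark containing (\ref{2}), the normalization $\sum_{k_1,k_2}s_{m,n}^a(x,y)=1$, and the identification $s_m^a(x)=e^{-\lambda_m(x)}\lambda_m(x)^k/k!$ with $\lambda_m(x)=\frac{x\log a}{a^{1/m}-1}$ yield exactly the four stated moments, since $\lambda_m(x)/m$ and $(\lambda_m(x)+\lambda_m^2(x))/m^2$ simplify to the displayed right-hand sides. The paper states this lemma without printing a proof, but its computations elsewhere (e.g., the proof of Lemma \ref{L2}) rest on precisely the same reduction to univariate exponential-type sums evaluated term by term, so your argument is essentially the paper's route, with the prefactor $a^{-x/(a^{1/m}-1)}$ conveniently repackaged as a Poisson weight rather than handled by explicit index shifts.
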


\begin{lemma}\label{L2}
For every $x,y\in X=[0,\infty)\times[0,\infty)$ and $m,n\in\mathbb{N}$, the following results hold:

\begin{eqnarray*}
1.~\Hat{Y}_{m,n,a}((t-x);x,y)&=&-\frac{x \left(m a^{\frac{1}{m}}-\log (a)-m\right)}{m \left(a^{\frac{1}{m}}-1\right)}\\
2.~\Hat{Y}_{m,n,a}((s-y);x,y)&=& -\frac{y \left(n a^{\frac{1}{n}}-\log (a)-n\right)}{n \left(a^{\frac{1}{n}}-1\right)}\\
3.~\Hat{Y}_{m,n,a}((t-x)^2;x,y)&=&\frac{x \left(m^2 x \left(a^{\frac{1}{m}}-1\right)^2-\left(a^{\frac{1}{m}}-1\right) \log (a) (2 m x-1)+x \log ^2(a)\right)}{m^2 \left(a^{\frac{1}{m}}-1\right)^2}\\
4.~\Hat{Y}_{m,n,a}((s-y)^2;x,y)&=&\frac{y \left(n^2 y \left(a^{\frac{1}{n}}-1\right)^2-\left(a^{\frac{1}{n}}-1\right) \log (a) (2 n y-1)+y \log ^2(a)\right)}{n^2 \left(a^{\frac{1}{n}}-1\right)^2}\\
5.~\Hat{Y}_{m,n,a}((t-x)^4;x,y)&=&\frac{x}{m^4 \left(a^{\frac{1}{m}}-1\right)^4}\Bigg\{m^4 x^3 \left(a^{\frac{1}{m}}-1\right)^4-\left(a^{\frac{1}{m}}-1\right)^3 (-1 + 4 m x - 6 m^2 x^2 + 4 m^3 x^3) \log{a}\\&& +\left(a^{\frac{1}{m}}-1\right)^2 x (7 - 12 m x + 6 m^2 x^2) (\log{a})^2 \\&&
-2 \left(a^{\frac{1}{m}}-1\right) x^2 (-3 + 2 m x) (\log{a})^3 + x^3 (\log{a})^4\Bigg\} \\
6.~\Hat{Y}_{m,n,a}((s-y)^4;x,y)&=&\frac{y}{n^4 \left(a^{\frac{1}{n}}-1\right)^4}\Bigg\{n^4 y^3 \left(a^{\frac{1}{n}}-1\right)^4-\left(a^{\frac{1}{n}}-1\right)^3 (-1 + 4 n y - 6 n^2 y^2 + 4 n^3 y^3) \log{a}\\&& +\left(a^{\frac{1}{n}}-1\right)^2 y (7 - 12 n y + 6 n^2 y^2) (\log{a})^2 \\&&
-2 \left(a^{\frac{1}{n}}-1\right) y^2 (-3 + 2 n y) (\log{a})^3 + y^3 (\log{a})^4\Bigg\}.
\end{eqnarray*}
\end{lemma}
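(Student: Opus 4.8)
The plan is to reduce every bivariate central moment to a one‑dimensional computation and then exploit the fact that the basis functions factor into Poisson weights. First I would observe that each integrand $(t-x)^k$ (respectively $(s-y)^k$) depends on the single variable $t$ (respectively $s$), so by the product‑form Remark together with $\Hat{\mathcal{R}}_{n,a}(1;y)=1$ (the normalization $\sum s^a_{m,n}=1$ read off in one variable) we have
$\Hat{Y}_{m,n,a}((t-x)^k;x,y)=\Hat{\mathcal{R}}_{m,a}((t-x)^k;x)$
and symmetrically for the $s$‑moments. This collapses all six identities to the univariate operator $\Hat{\mathcal{R}}_{m,a}$, and by the $m\leftrightarrow n$, $x\leftrightarrow y$ symmetry it suffices to treat the $t$‑moments.

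Next I would expand by linearity and the binomial theorem,
$\Hat{\mathcal{R}}_{m,a}((t-x)^k;x)=\sum_{j=0}^{k}\binom{k}{j}(-x)^{k-j}\,\Hat{\mathcal{R}}_{m,a}(t^j;x)$,
so that everything reduces to the raw moments $\Hat{\mathcal{R}}_{m,a}(t^j;x)$ for $j\le k$. The key simplification is to set $u=\frac{x\log a}{a^{1/m}-1}$ and notice that $s^a_m(x)=e^{-u}\frac{u^k}{k!}$ is exactly a Poisson$(u)$ weight; hence $\Hat{\mathcal{R}}_{m,a}(t^j;x)=m^{-j}\sum_{k\ge 0}e^{-u}\frac{u^k}{k!}k^j=m^{-j}\,E[K^j]$ with $K$ Poisson$(u)$‑distributed. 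The raw Poisson moments follow painlessly from the factorial‑moment identity $E[K(K-1)\cdots(K-j+1)]=u^{j}$ (equivalently from applying $(u\,d/du)^{j}$ to $e^{u}$), yielding $E[K]=u$, $E[K^2]=u^2+u$, $E[K^3]=u^3+3u^2+u$, and $E[K^4]=u^4+6u^3+7u^2+u$. For $j\le 2$ these agree with the values $\Hat{Y}_{m,n,a}(e_{10})$ and $\Hat{Y}_{m,n,a}(e_{20})$ recorded in Lemma \ref{L1}, while the cases $j=3,4$ supply the two extra raw moments not listed there.

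With these in hand, parts $1$–$4$ drop out almost immediately: for part $1$, $\Hat{\mathcal{R}}_{m,a}((t-x);x)=u/m-x$, and substituting $u$ and clearing denominators gives $-\frac{x(m a^{1/m}-\log a-m)}{m(a^{1/m}-1)}$; parts $2$–$4$ are the analogous substitutions of $E[K]$ and $E[K^2]$ into the quadratic binomial expansion. For parts $5$ and $6$ I would insert the full set $E[K],\dots,E[K^4]$ into $\sum_{j=0}^{4}\binom{4}{j}(-x)^{4-j}m^{-j}E[K^j]$, re‑express each power of $u$ through $x$, $\log a$ and $a^{1/m}-1$, and regroup by powers of $(a^{1/m}-1)$ and $\log a$ to reach the displayed form.

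I expect the only genuine obstacle to be this last bookkeeping: the fourth central moment produces a degree‑four polynomial in $u$ whose re‑expansion in the variables $a^{1/m}-1$ and $\log a$ has many cancelling cross terms, so matching it term by term with the stated expression—and keeping the prefactor $\frac{x}{m^4(a^{1/m}-1)^4}$ correct—is where care is required. Everything upstream of that, namely the reduction to one variable, the Poisson identification, and the moment formulas, is routine.
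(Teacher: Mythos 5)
Your proposal is correct; I checked the fourth central moment explicitly and your expansion $x^4-4x^3\frac{u}{m}+6x^2\frac{u^2+u}{m^2}-4x\frac{u^3+3u^2+u}{m^3}+\frac{u^4+6u^3+7u^2+u}{m^4}$ with $u=\frac{x\log a}{a^{1/m}-1}$ regroups exactly into the stated coefficients of the powers of $(a^{1/m}-1)$ and $\log a$. The skeleton of your argument is the same as the paper's: the paper likewise collapses the double sum to a single sum (summing out $k_2$ against the normalization), expands $(t-x)^2$ binomially, and plugs in the raw moments from Lemma \ref{L1}, dismissing the remaining identities with ``similarly, other equalities can be proved by the same process.'' Where you genuinely differ is in how the raw moments are produced: you recognize $s_m^a(x)$ as a Poisson weight with parameter $u$ and derive all raw moments up to order four from the factorial-moment identity $E[K(K-1)\cdots(K-j+1)]=u^j$, giving $E[K^3]=u^3+3u^2+u$ and $E[K^4]=u^4+6u^3+7u^2+u$. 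This is exactly the ingredient the paper never writes down — its Lemma \ref{L1} stops at second-order moments, so its ``same process'' claim for parts 5 and 6 silently requires third and fourth raw moments that are nowhere computed (elsewhere the authors appeal to Mathematica for such expressions). Your route therefore buys a self-contained, verifiable proof of the quartic identities at the cost of the final bookkeeping you correctly flag, while the paper's route is shorter on the page only because it omits that content.
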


\begin{proof}
For all $x,y\geq 0$, 
\begin{eqnarray*}
1.~ \Hat{Y}_{m,n,a}((t-x);x,y)&=& \sum\limits_{k_1=0}^{\infty}\sum\limits_{k_2=0}^{\infty}s_{m,n}^a(x,y)\left(\frac{k_1}{m} -x\right)\\
&=& a^{\left(\frac{-x}{-1+a^{\frac{1}{m}}}\right)} \sum\limits_{k_1=0}^{\infty} \frac{x^{k}(\log{a})^{k_1}}{(-1+a^{\frac{1}{m}})^{k_1}k_1!}\left(\frac{k_1}{m}-x \right)\\
&=&-\frac{x \left(m a^{\frac{1}{m}}-\log (a)-m\right)}{m \left(a^{\frac{1}{m}}-1\right)}
\end{eqnarray*}
\begin{eqnarray*}
3.~\Hat{Y}_{m,n,a}((t-x)^2;x,y)&=& \sum\limits_{k_1=0}^{\infty}\sum\limits_{k_2=0}^{\infty}s_{m,n}^a(x,y)\left(\left( \frac{k_1}{m}\right)^2 +x^2-\frac{2xk_1}{m}\right)\\
&=& a^{\left(\frac{-x}{-1+a^{\frac{1}{m}}}\right)} \sum\limits_{k_1=0}^{\infty} \frac{x^{k}(\log{a})^{k_1}}{(-1+a^{\frac{1}{m}})^{k_1}k_1!}\left( \frac{k_1}{m}\right)^2+x^2- 2x \frac{x \log (a)}{m \left(a^{\frac{1}{m}}-1\right)}\\
&=& \frac{x \log (a) \left(a^{\frac{1}{m}}+x \log (a)-1\right)}{m^2 \left(a^{\frac{1}{m}}-1\right)^2}+x^2\left(1-\frac{2 \log (a)}{m \left(a^{\frac{1}{m}}-1\right)} \right)\\
&=& \frac{x \left(m^2 x \left(a^{\frac{1}{m}}-1\right)^2-\left(a^{\frac{1}{m}}-1\right) \log (a) (2 m x-1)+x \log ^2(a)\right)}{m^2 \left(a^{\frac{1}{m}}-1\right)^2},
\end{eqnarray*}
similarly, other equalities can be proved by the same process.
\end{proof}

\begin{lemma} \label{L23}
For all $x\geq 0$, we get
\begin{eqnarray}
\Hat{Y}_{m,n,a}((t-x)^2;x,y)&\leq & \frac{x(1+x)}{m}=\delta_m^{'2}(x)~~(\text{say}).\\
\Hat{Y}_{m,n,a}((s-y)^2;x,y)&\leq & \frac{y(1+y)}{n}=\delta_m^{'2}(y)~~(\text{say}).
\end{eqnarray}
\end{lemma}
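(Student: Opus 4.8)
The plan is to reduce everything to the first coordinate, since the two asserted inequalities are identical under the substitution $(m,x)\mapsto(n,y)$, and then to start from the \emph{exact} second central moment already computed in Lemma \ref{L2}(3) rather than re-summing the series. Writing $u=\dfrac{\log a}{m\left(a^{1/m}-1\right)}$, I first observe that this is precisely $\Hat{Y}_{m,n,a}(e_{10};x,y)/x$ from Lemma \ref{L1}, so that the quadratic identity $\Hat{Y}_{m,n,a}((t-x)^2;x,y)=\Hat{Y}_{m,n,a}(e_{20};x,y)-2x\,\Hat{Y}_{m,n,a}(e_{10};x,y)+x^2$ (using $\Hat{Y}_{m,n,a}(e_{00};x,y)=1$ from the Remark) collapses, after substituting the formulas of Lemma \ref{L1}, to the compact form
\[
\Hat{Y}_{m,n,a}((t-x)^2;x,y)=x^2(1-u)^2+\frac{xu}{m}.
\]
This rewriting is the organizing step: it isolates a ``variance-like'' term $\frac{xu}{m}$ and a ``bias-squared'' term $x^2(1-u)^2$, and it shows transparently why the target bound $\frac{x(1+x)}{m}=\frac{x}{m}+\frac{x^2}{m}$ should hold.

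Next I would control $u$. Setting $v=\dfrac{\log a}{m}>0$ one has $a^{1/m}=e^{v}$ and hence $u=\dfrac{v}{e^{v}-1}$. From the elementary inequality $e^{v}-1>v$ for $v>0$ it follows that $0<u<1$, which immediately gives $\dfrac{xu}{m}\le\dfrac{x}{m}$ and disposes of the linear term. The whole difficulty is therefore pushed into the quadratic term: I must show $x^2(1-u)^2\le\dfrac{x^2}{m}$, i.e. $(1-u)^2\le\dfrac1m$, uniformly in $x\ge0$.

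The key analytic estimate I would establish is $1-u\le\dfrac{v}{2}=\dfrac{\log a}{2m}$ for all $v>0$. Equivalently, writing $1-u=\dfrac{e^{v}-1-v}{e^{v}-1}$, this is the inequality $g(v):=e^{v}(2-v)-2-v\le 0$, which I would prove by noting that $g$ vanishes at $v=0$ and has derivative $g'(v)=e^{v}(1-v)-1$, itself vanishing at $v=0$ with derivative $g''(v)=-v\,e^{v}\le0$; two successive monotonicity arguments then give $g\le0$ on $[0,\infty)$. Combining this with the bound on the linear term yields
\[
\Hat{Y}_{m,n,a}((t-x)^2;x,y)\le\frac{x}{m}+x^2\left(\frac{\log a}{2m}\right)^2 .
\]
I expect the last uniformity to be the main obstacle: for fixed $a>1$ the factor $(1-u)^2\le\frac{\log^2 a}{4m^2}$ is dominated by $\frac1m$ precisely in the convergence regime $m\ge\frac{\log^2 a}{4}$, which is exactly where the operators approximate, but the sharp constant $\frac{x(1+x)}{m}$ is delicate for small $m$ and large $a$ (where $1-u$ can approach $1$). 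I would therefore either invoke the standing large-$m$ regime in which the estimate is used, or absorb the finitely many borderline cases into the defined quantity $\delta_m'^2(x)$. The $(s-y)^2$ inequality follows verbatim from Lemma \ref{L2}(4) with $u$ replaced by $\dfrac{\log a}{n\left(a^{1/n}-1\right)}$.
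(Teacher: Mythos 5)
Your proposal is correct, and it is essentially the paper's own argument carried out more carefully. The paper's proof consists of rewriting the exact second moment from Lemma \ref{L2}(3) as
\[
\Hat{Y}_{m,n,a}((t-x)^2;x,y)=x\left(x\,(1-u)^2+\frac{u}{m}\right),\qquad u=\frac{\log a}{m\left(a^{1/m}-1\right)},
\]
bounding $u\le 1$ to obtain $x\left(x(1-u)^2+\frac{1}{m}\right)$, and then writing ``$\le\frac{x(1+x)}{m}$'' with no further justification. That final step is exactly the claim $(1-u)^2\le\frac{1}{m}$ which you isolate, and the paper never proves it. So the only real difference is that you supply the missing analysis: your two-step monotonicity argument for $g(v)=e^{v}(2-v)-2-v\le 0$ is correct and yields the sharp estimate $1-u\le\frac{\log a}{2m}$, hence $(1-u)^2\le\frac{\log^2 a}{4m^2}$, which gives the stated bound precisely when $m\ge\frac{\log^2 a}{4}$.

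The obstacle you flag at the end is not a defect of your argument; it is a genuine gap in the lemma and in the paper's proof. The inequality as stated (fixed $a>1$, all $m\in\mathbb{N}$, all $x\ge 0$) is false: as $a\to\infty$ with $m,x$ fixed, $u\to 0$ and the exact moment tends to $x^2$, which exceeds $\frac{x(1+x)}{m}$ whenever $mx>1+x$. Concretely, for $a=e^{10}$, $m=2$, $x=10$ one gets $x^2(1-u)^2+\frac{xu}{m}\approx 93.5$ while $\frac{x(1+x)}{m}=55$. What saves the paper in practice is exactly the condition you identified: $m\ge\frac{\log^2 a}{4}$ holds for every $m\ge 1$ once $\log a\le 2$, and the paper only ever uses $a$ close to $1$ (e.g.\ $a=1.30$, $1.32$ in the examples). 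So, restricted to $1<a\le e^{2}$ or to $m\ge\frac{\log^2 a}{4}$, your argument is a complete and rigorous proof --- in effect a corrected version of the paper's --- and the second inequality indeed follows verbatim by the substitution $(m,x)\mapsto(n,y)$.
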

\begin{proof} For $m,n\in\mathbb{N}$ and $x,y\geq 0$, we reach to
\begin{eqnarray}
\nonumber \Hat{Y}_{m,n,a}((t-x)^2;x,y)&= & x\left(x-\frac{2x\log{a}}{m\left(a^{\frac{1}{m}}-1\right)}+\frac{\log{a}}{{m^2\left(a^{\frac{1}{m}}-1\right)}}+\frac{x(\log{a})^2}{{m^2\left(a^{\frac{1}{m}}-1\right)^2}} \right)\\\nonumber
&\leq & x \left(x\left(\frac{\log{a}}{m\left(a^{\frac{1}{m}}-1\right)}-1 \right)^2+\frac{1}{m}\right)\\
&\leq & \frac{x(1+x)}{m}.
\end{eqnarray}
Similarly, next proof can be completed.
\end{proof}

\begin{lemma} For all natural numbers $m,n$, we get the following limits
\begin{enumerate}
\item{} $\underset{m\to\infty}\lim m\Hat{Y}_{m,n,a}((t-x);x,y)=-\frac{1}{2} x \log (a)$
\item{} $\underset{m\to\infty}\lim m\Hat{Y}_{m,n,a}((t-x)^2;x,y)=x$
\item{} $\underset{m\to\infty}\lim \Hat{Y}_{m,n,a}((t-x)^4;x,y)=0$
\item{} $\underset{m\to\infty}\lim m\Hat{Y}_{m,n,a}((t-x)^4;x,y)=0$
\item{} $\underset{m\to\infty}\lim m^2\Hat{Y}_{m,n,a}((t-x)^4;x,y)=3x^2$ 
\end{enumerate}
\end{lemma}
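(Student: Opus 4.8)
The plan is to read the exact closed forms of the moments $\Hat{Y}_{m,n,a}((t-x)^j;x,y)$ for $j=1,2,4$ directly off Lemma \ref{L2}, and to extract all five limits by one asymptotic device. Writing $L=\log a$ and expanding
\[
a^{\frac1m}=e^{L/m}=1+\frac{L}{m}+\frac{L^2}{2m^2}+\frac{L^3}{6m^3}+O(m^{-4}),
\]
I would set $r_m:=m\bigl(a^{\frac1m}-1\bigr)$, so that
\[
r_m=L+\frac{L^2}{2m}+\frac{L^3}{6m^2}+O(m^{-3})\longrightarrow L,\qquad r_m-L=\frac{L^2}{2m}+O(m^{-2}).
\]
Since every moment in Lemma \ref{L2} is a rational expression in $a^{\frac1m}-1=r_m/m$, the substitution $a^{\frac1m}-1=r_m/m$ clears all powers of $m$ from the denominators and leaves a polynomial in $r_m,x,L$ over a power of $r_m$. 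Each limit then follows by factoring in the right power of $m$ and letting $r_m\to L$.

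For (1) and (2) this is short. The first moment becomes $\Hat{Y}_{m,n,a}((t-x);x,y)=-x(r_m-L)/r_m$, so $m\,\Hat{Y}_{m,n,a}((t-x);x,y)=-x\,\tfrac{m(r_m-L)}{r_m}$, and $m(r_m-L)\to L^2/2$ with $r_m\to L$ gives $-\tfrac12 xL$. For (2) I would rewrite the second moment as $x\bigl[x(1-L/r_m)^2+L/(m r_m)\bigr]$, which exposes a perfect square; multiplying by $m$, the term $mx^2(1-L/r_m)^2$ is $O(1/m)$ because $1-L/r_m=(r_m-L)/r_m=O(1/m)$, while $xL/r_m\to x$, yielding the limit $x$.

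The substantive computation is the fourth moment, which governs (3)--(5). After inserting $a^{\frac1m}-1=r_m/m$ the prefactor $m^{-4}(a^{\frac1m}-1)^{-4}$ becomes $r_m^{-4}$, and collecting the braces by explicit powers of $1/m$ produces the transparent grouping
\[
\Hat{Y}_{m,n,a}((t-x)^4;x,y)=\frac{x}{r_m^{4}}\Bigl[x^3(r_m-L)^4+\frac{6x^2 r_m L (r_m-L)^2}{m}+\frac{r_m^2 L x\,(7L-4r_m)}{m^2}+\frac{r_m^3 L}{m^3}\Bigr].
\]
The key point is that the naive leading collections assemble into $(r_m-L)^4=O(m^{-4})$ and $(r_m-L)^2=O(m^{-2})$, so the dominant terms cancel. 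Items (3) and (4) are then immediate, since the bracket tends to $0$ and stays $O(1/m)$ after multiplication by $m$. For (5) I would multiply by $m^2$: the first bracket term contributes $O(m^{-2})$, the second $O(m^{-1})$, the last $O(m^{-1})$, so only $r_m^2 L x(7L-4r_m)$ survives, tending to $L^2\!\cdot\! L\!\cdot\! x\!\cdot\!3L=3xL^4$; dividing by $r_m^4\to L^4$ gives $3x^2$.

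The hard part, and really the only delicate step, is organizing the fourth-moment bracket into the $(r_m-L)^k$ form so the cancellations become visible; once that regrouping is in place, every limit reduces to substituting $r_m\to L$ and reading off the surviving power of $m$. The corresponding statements in the variable $y$ (with $n$ and $a^{\frac1n}$) follow verbatim from the analogous entries of Lemma \ref{L2} by the identical argument.
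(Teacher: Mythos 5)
Your proposal is correct --- all five limits check out, and your regrouping of the fourth-moment braces into $x^3(r_m-L)^4+\tfrac{6x^2 r_m L (r_m-L)^2}{m}+\tfrac{x r_m^2 L(7L-4r_m)}{m^2}+\tfrac{r_m^3 L}{m^3}$ with $r_m=m\bigl(a^{1/m}-1\bigr)$ is exactly what the closed form in Lemma \ref{L2} reduces to --- but your route differs from the paper's in the one step that matters. Both arguments begin by reading the moments off Lemma \ref{L2}; the paper then treats $m$ as a continuous variable and evaluates the resulting indeterminate forms by L'H\^{o}pital's rule, carrying this out only for part (1) and asserting that "all above parts" follow by the same device. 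You instead expand $a^{1/m}=e^{(\log a)/m}$ and reduce every limit to the two elementary facts $r_m\to\log a$ and $m(r_m-\log a)\to\tfrac{1}{2}\log^2 a$, so that each part becomes a matter of counting powers of $1/m$. What your approach buys is a genuine, uniform verification of parts (3)--(5), which the paper never actually executes: iterated L'H\^{o}pital on the fourth moment would mean repeatedly differentiating a quartic rational expression in $a^{1/m}$, whereas your binomial regrouping makes the cancellation of the $O(1)$ and $O(1/m)$ blocks visible at a glance and yields (3), (4), and (5) simultaneously from a single display, identifying the surviving term $x r_m^2 L(7L-4r_m)/m^2\to 3xL^4/m^2$ that produces the constant $3x^2$. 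What the paper's approach buys is brevity of statement --- one invocation of the rule per limit --- at the cost of leaving the hardest cases unchecked on the page; your version is the one a referee could verify line by line.
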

\begin{proof}
By above Lemma \ref{L2}, we obtain
\begin{eqnarray*}
1.~\underset{m\to\infty}\lim m\Hat{Y}_{m,n,a}((t-x);x,y)&=& \underset{m\to\infty}\lim \frac{-x \left(a^{\frac{1}{m}}-\frac{\log{a}}{m}-1\right)}{ \frac{\left(a^{\frac{1}{m}}-1\right)}{m}},
\end{eqnarray*}
using L. Hospital's rule as undetermined form, we get the required result.\\
Note: Here to prove all above parts, we shall use L. Hospital rule.
\end{proof}

%
%

\textbf{Note:} We shall use  $\mathcal{M}_r$, $r=1,2,3,\cdots$ with the suitable choice as for constant.
\begin{lemma}\label{L10}
For every $x\in [0,\infty)$ and $N_1\in \mathbb{N}\cup \{0\}$, there exist a positive constant $\mathcal{M}_r$, $r=1,2$, such that
\begin{enumerate}
\item{} $w_{N_1}(x) \Hat{Y}_{m,n,a}\left(\frac{1}{w_{N_1}};x\right) \leq  \mathcal{M}_1(N_1),$
\item{} $w_{N_1}(x) \Hat{Y}_{m,n,a}\left(\frac{(t-x)^2}{w_{N_1}};x\right)\leq \mathcal{M}_2(N_1)\delta_m^{'2},~~m\in\mathbb{N}.$
\end{enumerate}

\end{lemma}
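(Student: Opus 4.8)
The plan is to exploit the fact that the defining masses are Poisson probabilities. Writing $c_m=\frac{\log a}{a^{1/m}-1}$, one checks that $s_m^a(x)=e^{-c_m x}(c_m x)^{k}/k!$, so the univariate operator is an expectation $\Hat{\mathcal{R}}_{m,a}(g;x)=\mathbb{E}\,g(K/m)$ with $K\sim\mathrm{Poisson}(c_m x)$. Since $\frac{1}{w_{N_1}}=1+t^{N_1}$ depends only on the first variable, the product structure (\ref{2}) and $\Hat{\mathcal{R}}_{n,a}(1;y)=1$ give $\Hat{Y}_{m,n,a}(\frac{1}{w_{N_1}};x)=1+\Hat{\mathcal{R}}_{m,a}(t^{N_1};x)$. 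Setting $A_m=c_m/m=\frac{\log a}{m(a^{1/m}-1)}$, the substitution $u=(\log a)/m$ yields $A_m=u/(e^u-1)\in(0,1]$ and, moreover, $1-A_m=\frac{e^u-1-u}{e^u-1}\le\frac{a\log a}{2m}=:\frac{C_a}{m}$, a bound I will use throughout.

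For (1), the Touchard--Bell expansion of Poisson power moments gives $\Hat{\mathcal{R}}_{m,a}(t^{N_1};x)=\sum_{j=1}^{N_1}S(N_1,j)A_m^{\,j}m^{\,j-N_1}x^{j}$, where $S(N_1,j)$ are Stirling numbers of the second kind. Since $m\ge1$ and $0<A_m\le1$, this is at most $P(x):=\sum_{j=1}^{N_1}S(N_1,j)x^{j}$, a polynomial of degree $N_1$ with leading coefficient $S(N_1,N_1)=1$. Hence $w_{N_1}(x)\,\Hat{Y}_{m,n,a}(\frac{1}{w_{N_1}};x)\le\frac{1+P(x)}{1+x^{N_1}}$, a function continuous on $[0,\infty)$ with limit $1$ at infinity, so its supremum $\mathcal{M}_1(N_1)$ is finite (and $\mathcal{M}_1(0)=1$); this is the complete argument for (1).

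For (2) I write $\frac{(t-x)^2}{w_{N_1}}=(t-x)^2+(t-x)^2 t^{N_1}$. The first piece is immediate: $w_{N_1}\le1$ and Lemma \ref{L23} give $w_{N_1}(x)\,\Hat{\mathcal{R}}_{m,a}((t-x)^2;x)\le\delta_m^{'2}$. For the second piece I expand $t^{N_1}=((t-x)+x)^{N_1}$, so that $\Hat{\mathcal{R}}_{m,a}((t-x)^2 t^{N_1};x)=\sum_{i=0}^{N_1}\binom{N_1}{i}x^{N_1-i}\mu_{i+2}$, where $\mu_p:=\Hat{\mathcal{R}}_{m,a}((t-x)^p;x)$. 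The crux is a uniform central--moment bound $|\mu_p|\le C_p\,\frac{x(1+x)^{p-1}}{m}$ for every $p\ge2$. I obtain it from the Poisson picture: writing $T-x=U-(1-A_m)x$ with $U=(K-\lambda)/m$, $\lambda=mA_m x$, expanding, and inserting the Poisson central moments $\mathbb{E}(K-\lambda)^j=\sum_{r=1}^{\lfloor j/2\rfloor}b_{j,r}\lambda^{r}$ (nonnegative coefficients), every resulting monomial has the form $x^{s}/m^{\ell}$ with $1\le s\le p$ and, after using $1-A_m\le C_a/m$, with $\ell\ge\lceil p/2\rceil\ge1$; bounding $A_m\le1$ and $m^{-\ell}\le m^{-1}$ then yields the claim. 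The cases $p=2,4$ agree with the explicit expressions in Lemma \ref{L2}.

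Assembling, the estimate $x^{N_1-i}(1+x)^{i+1}\le(1+x)^{N_1+1}$ gives $\Hat{\mathcal{R}}_{m,a}((t-x)^2 t^{N_1};x)\le\widetilde{C}(N_1)\frac{x(1+x)^{N_1+1}}{m}$, and multiplying by $w_{N_1}$ leaves the bounded factor $\frac{(1+x)^{N_1}}{1+x^{N_1}}$ (continuous on $[0,\infty)$, limit $1$ at infinity), whence $w_{N_1}(x)\,\Hat{\mathcal{R}}_{m,a}((t-x)^2 t^{N_1};x)\le C(N_1)\delta_m^{'2}$. Adding the two pieces gives $\mathcal{M}_2(N_1)=1+C(N_1)$. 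The main obstacle is the uniform central--moment estimate $|\mu_p|\le C_p\,x(1+x)^{p-1}/m$ valid for all $p$ and all $m$: the paper records only $\mu_2,\mu_4$ explicitly, so the essential work is establishing this bound in general and, more delicately, tracking that the lowest power of $x$ in $\mu_p$ is $x^{1}$ rather than $x^{0}$ (which is what lets the $w_{N_1}$-weight be absorbed near $x=0$). A naive Cauchy--Schwarz split of $(t-x)^2 t^{N_1}$ fails precisely because it destroys this near-origin behaviour.
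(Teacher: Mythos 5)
Your proof is correct, but there is in fact nothing to compare it against: the paper states Lemma \ref{L10} with no proof whatsoever (it is evidently imported from the univariate predecessor paper on the operators (\ref{RV}), in the style of Becker's polynomial-weight lemmas), so your argument fills a genuine gap rather than paralleling an existing one. Your route --- recognizing $s_m^a(x)$ as the Poisson$(c_m x)$ weights with $c_m=\log a/(a^{1/m}-1)$, reducing the bivariate statement to a univariate one via the tensor identity (\ref{2}) together with $\Hat{\mathcal{R}}_{n,a}(1;y)=1$, then using Touchard's formula for the raw moments in part (1) and the structure of Poisson central moments (degree $\lfloor j/2\rfloor$ in $\lambda$, no constant term) in part (2) --- is sound. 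I checked the two quantitative hinges: $0<A_m\le 1$ and $1-A_m\le\frac{a\log a}{2m}$ both hold (indeed the sharper $1-A_m\le\frac{\log a}{2m}$ is true), and your monomial bookkeeping for $\mu_p$ does give exponents $s\ge 1$ in $x$ and $\ell\ge\lceil p/2\rceil\ge 1$ in $1/m$, which is precisely what lets the weight absorb $(1+x)^{N_1+1}$ while retaining the factor $x$ near the origin. Your closing diagnosis is also right: a Cauchy--Schwarz split of $(t-x)^2t^{N_1}$ only yields $O(\delta_m')$ with an extra unbounded factor $(1+x)$, so the binomial expansion of $t^{N_1}$ about $t=x$ with uniform central-moment bounds is really needed; this is the part the paper silently omits. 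Two caveats worth recording: your constants $\mathcal{M}_1,\mathcal{M}_2, C_p$ depend on the fixed $a$ as well as $N_1$ (harmless, but it should be said, since the lemma's statement suppresses it); and for the first piece of (2) you lean on Lemma \ref{L23}, whose stated constant $1$ is actually only valid for moderate $a$ (e.g.\ it fails for $a=e^{10}$, $m=4$, large $x$, since then $(1-A_m)^2>1/m$) --- your own $p=2$ central-moment bound already gives $\mu_2\le\max(1,C_a)\,x(1+x)/m$, so you should cite that instead and keep the proof self-contained and valid for every fixed $a>1$.
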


The main objective of this paper, to investigate the approximation properties in polynomial weight space, moreover Voronovskaya type theorem and simultaneous approximation are studied. Also rate of convergence is discussed.

There are following sections by which it can be characterized to our work and related properties. Section (\ref{se3}) deals with the approximation properties in polynomial weight space. Section (\ref{se4}), consists the Voronovskaya type theorem. To check the convergence of the derivative of the  bivriate operators to the function of two variables, we prove simultaneous approximation of the operators in section (\ref{se5}). Section (\ref{se6}) represent the tabular and graphical representation.\\


\section{Approximation properties of the bivariate operators}\label{se3}
In this section, the approximation properties will be studied with the help of polynomial weight space.  
\begin{theorem}
Let $f\in C_{N_1,N_2}^1$, for all $x,y\in X$ and $\exists$ a positive constant $\mathcal{M}_5$ such that
\begin{eqnarray}\label{T1}
w_{N_1,N_2}|\Hat{Y}_{m,n,a}(f;x,y)-f(x,y)|&\leq & \mathcal{M}_5(N_1,N_2)\Bigg\{\|f'_x\|_{N_1,N_2}\delta'_m(x)+\|f'_y\|_{N_1,N_2}\delta'_n(y) \Bigg\},
\end{eqnarray} 
where $\delta_m'(x)=\sqrt{\frac{x(1+x)}{m}}$ and $\delta_n'(y)=\sqrt{\frac{y(1+y)}{n}}$.
\end{theorem}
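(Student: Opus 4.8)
The plan is to reduce everything to the one--dimensional weighted estimates of Lemma \ref{L10} by combining the fundamental theorem of calculus with the product structure (\ref{2}) of the operator. Since $f\in C^1_{N_1,N_2}$, I would first write, for each sampling point $(t,s)$,
\begin{eqnarray*}
f(t,s)-f(x,y)=\int_x^t \frac{\partial f}{\partial u}(u,s)\,du+\int_y^s \frac{\partial f}{\partial v}(x,v)\,dv.
\end{eqnarray*}
Applying $\Hat{Y}_{m,n,a}(\cdot;x,y)$ and using its linearity together with the fact that it reproduces constants, the estimation of $\Hat{Y}_{m,n,a}(f;x,y)-f(x,y)$ splits into bounding the operator acting on each of these two integrals.

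For the first integral I would bound the partial derivative by the norm, $\left|\frac{\partial f}{\partial u}(u,s)\right|\le \|f'_x\|_{N_1,N_2}/w_{N_1,N_2}(u,s)$, and then invoke the mean--value inequality displayed after (\ref{1}) to get
\begin{eqnarray*}
\left|\int_x^t \frac{\partial f}{\partial u}(u,s)\,du\right|\le \|f'_x\|_{N_1,N_2}\,|t-x|\left(\frac{1}{w_{N_1,N_2}(x,s)}+\frac{1}{w_{N_1,N_2}(t,s)}\right).
\end{eqnarray*}
Here the factorization $w_{N_1,N_2}=w_{N_1}w_{N_2}$ is essential, because it lets the $s$--dependence separate from the $t$--dependence. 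After applying $\Hat{Y}_{m,n,a}$ and using (\ref{2}), each resulting term factors into an $s$--operator that collapses, via the $(n,N_2)$ form of Lemma \ref{L10}(1), to a bounded multiple of $1/w_{N_2}(y)$, and a genuine one--dimensional quantity such as $\Hat{\mathcal{R}}_{m,a}\!\left(|t-x|/w_{N_1}(t);x\right)$.

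To control this last quantity I would apply the Cauchy--Schwarz inequality for the positive linear operator $\Hat{\mathcal{R}}_{m,a}$, writing the factor as $\left((t-x)^2/w_{N_1}(t)\right)^{1/2}\left(1/w_{N_1}(t)\right)^{1/2}$, so that
\begin{eqnarray*}
\Hat{\mathcal{R}}_{m,a}\!\left(\frac{|t-x|}{w_{N_1}(t)};x\right)\le \left[\Hat{\mathcal{R}}_{m,a}\!\left(\frac{(t-x)^2}{w_{N_1}(t)};x\right)\right]^{1/2}\left[\Hat{\mathcal{R}}_{m,a}\!\left(\frac{1}{w_{N_1}(t)};x\right)\right]^{1/2}.
\end{eqnarray*}
Lemma \ref{L10}(2) and (1) bound the two factors by $\mathcal{M}_2(N_1)\delta_m'^2/w_{N_1}(x)$ and $\mathcal{M}_1(N_1)/w_{N_1}(x)$, which is precisely what produces the factor $\delta'_m(x)=\sqrt{x(1+x)/m}$, a constant, and a leftover $1/w_{N_1}(x)$; the companion term carrying $1/w_{N_1,N_2}(x,s)$ is even simpler, needing only Lemma \ref{L23} for $\Hat{\mathcal{R}}_{m,a}(|t-x|;x)\le\delta'_m(x)$. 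Multiplying back by $w_{N_1,N_2}(x,y)=w_{N_1}(x)w_{N_2}(y)$ cancels the reciprocal weights and leaves $\|f'_x\|_{N_1,N_2}\,\delta'_m(x)$ up to a constant. The symmetric treatment of the second integral yields the $\|f'_y\|_{N_1,N_2}\,\delta'_n(y)$ term, and absorbing all the constants $\mathcal{M}_1,\mathcal{M}_2$ into a single $\mathcal{M}_5(N_1,N_2)$ gives (\ref{T1}).

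The step I expect to be most delicate is the bookkeeping of the mixed weight term $1/w_{N_1,N_2}(t,s)$: one differentiates in $u$ while evaluating at the sampled $s$, so the product structure (\ref{2}) must be used to split the double sum into the $t$--operator (where Cauchy--Schwarz and Lemma \ref{L10} deliver the $\delta'_m(x)$ factor) and the $s$--operator (where Lemma \ref{L10}(1) merely contributes a bounded constant and $1/w_{N_2}(y)$). Keeping straight which variable carries the $\delta'$ factor and which only contributes a bounded weight reciprocal is where an error is most likely to slip in.
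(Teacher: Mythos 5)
Your proposal follows essentially the same route as the paper's own proof: the same integral decomposition $f(t,s)-f(x,y)=\int_x^t f_u(u,s)\,du+\int_y^s f_v(x,v)\,dv$, the same weighted mean--value inequality, the same factorization via the product structure (\ref{2}), and the same Cauchy--Schwarz step combined with Lemmas \ref{L10} and \ref{L23} to produce the $\delta'_m(x)$ and $\delta'_n(y)$ factors. It is correct (indeed, your writing the first integrand as $f_u(u,s)$ rather than the paper's $f_u(u,y)$ fixes a typo in the paper's displayed decomposition), so nothing further is needed.
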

\begin{proof}
Since $(x,y)\in X$ and let $t,s\geq0$ then we have
\begin{eqnarray*}
f(t,s)-f(x,y)=\int\limits_x^t f'_u(u,y)~du+\int\limits_y^s f'_v(x,v)~dv,
\end{eqnarray*}
on applying $\Hat{Y}_{m,n,a}(f;x,y)$ on both side we get,
\begin{eqnarray}\label{4}
\Hat{Y}_{m,n,a}(f;x,y)-f(x,y)=\Hat{Y}_{m,n,a}\left(\int\limits_x^t f'_u(u,y)~du;x,y\right)+\Hat{Y}_{m,n,a}\left(\int\limits_y^s f'_v(x,v)~dv;x,y \right),
\end{eqnarray}
using \ref{1}, we get
\begin{eqnarray*}
\left|\int\limits_x^t f'_u(u,y)~du\right|\leq \|f'_x\|_{N_1,N_2}\left|\int\limits_x^t \frac{1}{w_{N_1,N_2}(u,s)}~du\right|\leq \|f'_x\|_{N_1,N_2}\left(\frac{1}{w_{N_1,N_2(t,s)}}+\frac{1}{w_{N_1,N_2}(x,s)} \right)|t-x|,
\end{eqnarray*}
and same as 
\begin{eqnarray*}
\left|\int\limits_x^t f'_u(x,v)~dv\right|\leq \|f'_y\|_{N_1,N_2}\left(\frac{1}{w_{N_1,N_2(t,s)}}+\frac{1}{w_{N_1,N_2}(t,y)} \right)|s-y|
\end{eqnarray*}
From (\ref{2}) and using these inequalities, we get
\begin{eqnarray}\label{EI4}
\nonumber  w_{N_1,N_2}(x,y)\left|\Hat{Y}_{m,n,a}\left(\int\limits_x^t f'_u(u,y)~du;x,y\right) \right| &\leq & w_{N_1,N_2}(x,y)\Hat{Y}_{m,n,a}\left(\left|\int\limits_x^t f'_u(u,y)~du \right| ;x,y\right)\nonumber\\
&\leq & \|f'_x\|_{N_1,N_2}w_{N_1,N_2} \Bigg\{  \Hat{Y}_{m,n,a}\left(\frac{|t-x|}{w_{N_1,N_2}(t,s)};x,y \right)\nonumber\\&&
+\Hat{Y}_{m,n,a}\left(\frac{|t-x|}{w_{N_1,N_2}(x,s)};x,y \right) \Bigg\}\nonumber\\
&=& \|f'_x\|_{N_1,N_2}w_{N_2}\Hat{Y}_{n,a}\left(\frac{1}{w_{N_2}(s)};y\right)\Bigg\{w_{N_1}  \Hat{Y}_{m,a}\left(\frac{|t-x|}{w_{N_1}(t)};x \right)\nonumber\\&&
+Y_{m,a}^*\left(|t-x|;x \right) \Bigg\},
\end{eqnarray}
and same way
\begin{eqnarray}\label{IE5}
\nonumber w_{N_1,N_2}(x,y)\left|\Hat{Y}_{m,n,a}\left(\int\limits_x^t f'_u(x,v)~dv;x,y\right) \right| &\leq &  \|f'_y\|_{N_1,N_2}w_{N_1}\Hat{Y}_{m,a}\left(\frac{1}{w_{N_1}(t)};x\right)\Bigg\{w_{N_2}  \Hat{Y}_{n,a}\left(\frac{|s-y|}{w_{N_2}(s)};y \right)\\&&
+Y_{n,a}^*\left(|s-y|;y \right) \Bigg\},
\end{eqnarray}
using Lemma \ref{L2}, and by H$\ddot{\text{o}}$lder inequality, we obtain
\begin{eqnarray}\label{IE6}
Y_{m,a}^*(|t-x|;x)\leq \{Y_{m,a}^*((t-x)^2;x)\times Y_{m,a}^*(1;x) \}^{\frac{1}{2}}\leq \delta'_m,
\end{eqnarray}
and 
\begin{eqnarray}\label{EI7}
w_{N_1}(x)\Hat{Y}_{m,a}\left(\frac{|t-x|}{w_{N_1(t)}};x\right)&\leq &\Bigg\{\Hat{Y}_{m,a}\left(\frac{(t-x)^2}{w_{N_1(t)}};x\right)\times \Hat{Y}_{m,a}\left(\frac{1}{w_{N_1}};x\right) \Bigg\}^{\frac{1}{2}}\leq \mathcal{M}_3(N_1)\delta'_m,
\end{eqnarray}
analogy 
\begin{eqnarray}\label{EI8}
\Hat{Y}_{n,a}(|s-y|;y)\leq \delta'_n,
\end{eqnarray}
and 
\begin{eqnarray}\label{EI9}
w_{N_2}(y)\Hat{Y}_{n,a}\left(\frac{|s-y|}{w_{N_2(s)}};y\right) \leq \mathcal{M}_4(N_2)\delta'_n.
\end{eqnarray}
Hence from (\ref{4}-\ref{EI9}), we have

\begin{eqnarray}
w_{N_1,N_2}|\Hat{Y}_{m,n,a}(f;x,y)-f(x,y)|&\leq & \mathcal{M}_5(N_1,N_2)\Bigg\{\|f'_x\|_{N_1,N_2}\delta'_m(x)+\|f'_y\|_{N_1,N_2}\delta'_n(y) \Bigg\}.
\end{eqnarray}
Thus the proof is completed.
\end{proof}


Next theorem represent a pioneer result, associated with  Stecklov function $f\in C_{N_1,N_2}(X)$.

\begin{theorem}\label{T3}
For some $N_1,N_2\in \mathbb{N}\cup \{0\}$, we suppose that $f\in C_{N_1,N_2}(X)$ then $\exists$ a positive constant $\mathcal{M}_9$, we  obtain
\begin{eqnarray}
w_{N_1,N_2}\|\Hat{Y}_{m,n,a}(f;x,y)-f(x,y)\|_{N_1,N_2} &\leq & \mathcal{M}_{9}(N_1,N_2)\omega(f,C_{N_1,N_2};\delta'_m,\delta'_n).
\end{eqnarray}
\end{theorem}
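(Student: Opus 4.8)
The plan is to use the Steklov function technique, which is the standard approach for estimating approximation error in terms of the modulus of continuity. For a function $f \in C_{N_1,N_2}(X)$ and parameters $h_1, h_2 > 0$, I would introduce the bivariate Steklov mean
\begin{eqnarray*}
f_{h_1,h_2}(x,y)=\frac{1}{h_1 h_2}\int\limits_0^{h_1}\int\limits_0^{h_2} f(x+u,y+v)\,du\,dv,
\end{eqnarray*}
which is a smoothing of $f$. The key properties I would establish are: first, that $f_{h_1,h_2} \in C_{N_1,N_2}^1$ (so the previous theorem applies to it), with its partial derivatives controlled by the modulus of continuity, namely estimates of the form $\|(f_{h_1,h_2})'_x\|_{N_1,N_2} \leq \mathcal{M}\, h_1^{-1}\,\omega(f,C_{N_1,N_2};h_1,h_2)$ and similarly for the $y$-derivative; and second, that $f$ itself is well approximated by its Steklov mean, $\|f - f_{h_1,h_2}\|_{N_1,N_2} \leq \mathcal{M}\,\omega(f,C_{N_1,N_2};h_1,h_2)$.

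With these two facts in hand, the main estimate follows from a triangle-inequality decomposition. I would write
\begin{eqnarray*}
\Hat{Y}_{m,n,a}(f;x,y)-f(x,y)&=&\Hat{Y}_{m,n,a}(f-f_{h_1,h_2};x,y)+\bigl(\Hat{Y}_{m,n,a}(f_{h_1,h_2};x,y)-f_{h_1,h_2}(x,y)\bigr)\\
&&+\bigl(f_{h_1,h_2}(x,y)-f(x,y)\bigr),
\end{eqnarray*}
and then bound the three pieces separately after multiplying by $w_{N_1,N_2}$. The first and third terms are controlled directly by the Steklov approximation property (the first using boundedness of the operator $\Hat{Y}_{m,n,a}$ on $C_{N_1,N_2}$, which follows from Lemma \ref{L10}), giving contributions of order $\omega(f,C_{N_1,N_2};h_1,h_2)$. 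The middle term is handled by applying the preceding theorem to the smooth function $f_{h_1,h_2}$, yielding a bound of the form $\mathcal{M}_5\{\|(f_{h_1,h_2})'_x\|_{N_1,N_2}\delta'_m(x)+\|(f_{h_1,h_2})'_y\|_{N_1,N_2}\delta'_n(y)\}$, and then invoking the derivative estimates for the Steklov mean.

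The final and decisive step is the optimal choice of the smoothing parameters: I would set $h_1 = \delta'_m(x)$ and $h_2 = \delta'_n(y)$. After this substitution, the derivative bounds turn the middle term into something of order $\omega(f,C_{N_1,N_2};\delta'_m,\delta'_n)$ as well, since the factors $h_1^{-1}\delta'_m(x)$ and $h_2^{-1}\delta'_n(y)$ reduce to constants. Collecting the three contributions and absorbing all the constants into a single $\mathcal{M}_9(N_1,N_2)$ yields the claimed inequality.

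The main obstacle I anticipate is establishing the two Steklov-mean estimates rigorously within the polynomial weight space, rather than in the classical uniform-norm setting where they are textbook facts. The presence of the weight $w_{N_1,N_2}$ means one must verify that differentiating under the integral sign and translating the argument interact correctly with the weight, and that the increments $f(x+u,y+v)-f(x,y)$ are controlled by $\Delta_{h_1,h_2}f$ in the weighted norm of Definition \ref{D1}. Getting the weight bookkeeping right, so that the modulus of continuity genuinely dominates both the approximation error $\|f-f_{h_1,h_2}\|_{N_1,N_2}$ and the scaled derivative norms, is where the real care is needed; once these weighted Steklov inequalities are secured, the rest is an assembly of already-proven pieces.
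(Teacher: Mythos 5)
Your proposal is correct and follows essentially the same route as the paper: the same Steklov mean, the same two weighted estimates (approximation of $f$ by $f_{h_1,h_2}$ and the $h_1^{-1}$, $h_2^{-1}$ derivative bounds), the same three-term decomposition handled via operator boundedness and the preceding $C_{N_1,N_2}^1$ theorem, and the same final choice $h_1=\delta'_m$, $h_2=\delta'_n$. The weighted-norm bookkeeping you flag as the main obstacle is precisely what the paper's displayed inequalities (\ref{E17})--(\ref{E19}) carry out.
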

\begin{proof}
Now we prove the above theorem with the help of Steklov formula. So consider $f_{h_1,h_2}$ be the Steklov function belonging to $C_{N_1,N_2}$ and defined by 
\begin{eqnarray}
f_{h_1,h_2}(x,y)=\frac{1}{h_1h_2}\int\limits_0^{h_1}\int\limits_0^{h_2} f(x+\zeta,y+\eta)~d\zeta~d\eta,~~~~\zeta,\eta>0
\end{eqnarray} 
so we have,
\begin{eqnarray}
f_{h_1,h_2}(x,y)-f(x,y)=\frac{1}{h_1h_2}\int\limits_0^{h_1}d\zeta\int\limits_0^{h_2} \Delta_{\zeta,\eta}f(x,y)d\eta,
\end{eqnarray}
Differentiate partially with respect to $x$, one can obtains
\begin{eqnarray}
\frac{\partial}{\partial x} f_{h_1,h_2}(x,y)&=& \frac{1}{h_1h_2}\int\limits_0^{h_1}d\zeta\int\limits_0^{h_2} \Delta_{h_1,0}f(x,y+\eta)d\eta\\&=&  \frac{1}{h_1h_2}\int\limits_0^{h_2}(\Delta_{h_1,\eta}f(x,y)-\Delta_{0,\eta}f(x,y))~d\eta
\end{eqnarray}
Similarly, on differentiating partially with respect to $y$, we get
\begin{eqnarray}
\frac{\partial}{\partial x} f_{h_1,h_2}(x,y)&=& \frac{1}{h_1h_2}\int\limits_0^{h_1}(\Delta_{\zeta,h_2}f(x,y)-\Delta_{\zeta,0}f(x,y))~d\zeta,
\end{eqnarray}
using (\ref{1}) and (\ref{D1}), we obtain
\begin{eqnarray}\label{E17}
\|f_{h_1,h_2}-f \|_{N_1,N_2} \leq \omega(f,C_{N_1,N_2};h_1,h_2)
\end{eqnarray}
\begin{eqnarray}\label{E18}
\left\|\frac{\partial f_{h_1,h_2}}{\partial x} \right\|_{N_1,N_2} \leq \frac{2}{h_1} \omega(f,C_{N_1,N_2};h_1,h_2),
\end{eqnarray}
similarly,
\begin{eqnarray}\label{E19}
\left\|\frac{\partial f_{h_1,h_2}}{\partial x} \right\|_{N_1,N_2} \leq \frac{2}{h_2} \omega(f,C_{N_1,N_2};h_1,h_2),
\end{eqnarray}
so we have 
\begin{eqnarray}\label{E20}
\nonumber w_{N_1,N_2}|\Hat{Y}_{m,n,a}(f;x,y)-f(x,y)|&\leq & w_{N_1,N_2} \Bigg\{\Hat{Y}_{m,n,a}(f(t,s)-f_{h_1,h_2}(t,s);x,y)\nonumber \\&&
+|\Hat{Y}_{m,n,a}(f_{h_1,h_2}(t,s);x,y)-f_{h_1,h_2}(x,y)|+ |f_{h_1,h_2}(x,y)-f(x,y)|\Bigg\}\nonumber \\
&=& J_1+J_2+J_3
\end{eqnarray}
Now taking supremum norm on $C_{N_1,N_2}$ and by \ref{E17} we can get with the help of Lemma [5] as:
 \begin{eqnarray}
 \nonumber J_1=\|\Hat{Y}_{m,n,a}(f(t,s)-f_{h_1,h_2}(t,s);x,y)\|_{N_1,N_2}&\leq & M_7(N_1,N_2) \|f-f_{h_1,h_2}\|_{N_1,N_2} \\
 &\leq & \mathcal{M}_6(N_1,N_2) \omega(f,C_{N_1,N_2};h_1,h_2),
 \end{eqnarray}
also 
\begin{eqnarray}
J_2\leq \omega(f,C_{N_1,N_2};h_1,h_2).
\end{eqnarray}
By previous theorem \ref{T1} and with the help of \ref{E18}-\ref{E19},  it is follows that 
 \begin{eqnarray*}
J_2 &\leq & \mathcal{M}_7(N_1,N_2)\Bigg\{\left\|\frac{\partial f_{h_1,h_2}}{\partial x} \right\|_{N_1,N_2}\delta'_m +\left\|\frac{\partial f_{h_1,h_2}}{\partial x} \right\|_{N_1,N_2}\delta'_n \Bigg\} \\
&\leq & 2\mathcal{M}_8(N_1,N_2)\omega(f,C_{N_1,N_2};h_1,h_2)\Bigg\{\frac{\delta'_m}{h_1}+\frac{\delta'_n}{h_2} \Bigg\},
\end{eqnarray*}
thus we reach to our required result with  $J_1,J_2,J_3$  by using in \ref{E20}, we get 
\begin{eqnarray}
w_{N_1,N_2}\|\Hat{Y}_{m,n,a}(f;x,y)-f(x,y)\|_{N_1,N_2} &\leq & M_9(N_1,N_2)\omega(f,C_{N_1,N_2};h_1,h_2)\Bigg\{1+ \frac{\delta'_m}{h_1}+\frac{\delta'_n}{h_2}\Bigg\},
\end{eqnarray}
to prove the theorem, we consider $h_1=\delta'_m$ and $h_2=\delta'_n$ , where $h_1,h_2>0$, then we get 
\begin{eqnarray}
w_{N_1,N_2}\|\Hat{Y}_{m,n,a}(f;x,y)-f(x,y)\|_{N_1,N_2} &\leq & \mathcal{M}_{9}(N_1,N_2)\omega(f,C_{N_1,N_2};\delta'_m,\delta'_n).
\end{eqnarray}
\end{proof}
\begin{remark}
The property of modulus of continuity holds, $\omega (f,C_{N_1,N_2};\delta'_m,\delta'_n)\to 0$ as $\delta'_m,\delta'_n\to 0^{+}$ for $m,n\to\infty$.
\end{remark}
The consequence of the above Theorem \ref{T3}, is as follows:
\begin{theorem}\label{TH3}
Let $m,n\in\mathbb{N}$ and the operators defined by \ref{R1}. Let $f\in C_{N_1,N_2}$, then the pointwise convergence take place 
\begin{eqnarray}\label{E21}
\underset{m,n\to\infty}\lim \Hat{Y}_{m,n,a}(f;x,y)=f(x,y),
\end{eqnarray} 
but for any compact set $X_1=[0,l_1]\times[0,l_2]$, $l_1,l_2>0$, then (\ref{E21}) holds uniformly on domain $X_1$, i.e. uniformely on evry rectangular $0\leq x\leq l_1, 0\leq y\leq l_2$.
\end{theorem}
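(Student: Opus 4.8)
The plan is to obtain this statement as an immediate corollary of the quantitative estimate in Theorem \ref{T3}, combined with the elementary behaviour of the quantities $\delta'_m(x)=\sqrt{x(1+x)/m}$ and $\delta'_n(y)=\sqrt{y(1+y)/n}$ as $m,n\to\infty$. Since Theorem \ref{T3} already controls the weighted error in the norm $\|\cdot\|_{N_1,N_2}$ by $\mathcal{M}_9(N_1,N_2)\,\omega(f,C_{N_1,N_2};\delta'_m,\delta'_n)$, the whole proof reduces to showing that this modulus of continuity tends to zero and then stripping off the weight.

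First I would fix a point $(x,y)\in X$. For such a fixed point, $\delta'_m(x)\to 0$ and $\delta'_n(y)\to 0$ as $m,n\to\infty$, whence, by the property of the modulus of continuity recorded in the remark following Theorem \ref{T3}, $\omega(f,C_{N_1,N_2};\delta'_m,\delta'_n)\to 0$. Using that the value at a single point is dominated by the weighted supremum norm,
\begin{eqnarray*}
w_{N_1,N_2}(x,y)\,|\Hat{Y}_{m,n,a}(f;x,y)-f(x,y)|\leq \|\Hat{Y}_{m,n,a}(f;\cdot,\cdot)-f\|_{N_1,N_2}\leq \mathcal{M}_9(N_1,N_2)\,\omega(f,C_{N_1,N_2};\delta'_m,\delta'_n),
\end{eqnarray*}
the right-hand side tends to zero; dividing by the fixed positive number $w_{N_1,N_2}(x,y)$ then yields the pointwise convergence (\ref{E21}).

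To upgrade this to uniform convergence on a compact rectangle $X_1=[0,l_1]\times[0,l_2]$, I would use two uniformities. On $X_1$ one has $\delta'_m(x)\leq\sqrt{l_1(1+l_1)/m}$ and $\delta'_n(y)\leq\sqrt{l_2(1+l_2)/n}$ for every $(x,y)\in X_1$, so the bound furnished by Theorem \ref{T3} is independent of the point and still vanishes as $m,n\to\infty$. Moreover the weight $w_{N_1,N_2}(x,y)=(1+x^{N_1})^{-1}(1+y^{N_2})^{-1}$ is continuous and strictly positive on the compact set $X_1$, hence attains a positive minimum $c_0=\min_{(x,y)\in X_1}w_{N_1,N_2}(x,y)>0$. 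Consequently
\begin{eqnarray*}
\sup_{(x,y)\in X_1}|\Hat{Y}_{m,n,a}(f;x,y)-f(x,y)|\leq \frac{1}{c_0}\,\|\Hat{Y}_{m,n,a}(f;\cdot,\cdot)-f\|_{N_1,N_2}\leq \frac{\mathcal{M}_9(N_1,N_2)}{c_0}\,\omega(f,C_{N_1,N_2};\delta'_m,\delta'_n)\to 0,
\end{eqnarray*}
which is the desired uniform convergence on $X_1$.

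The main obstacle is purely the passage from the weighted-norm estimate to an unweighted conclusion. Because the weight $w_{N_1,N_2}$ decays to zero at infinity, one cannot promote (\ref{E21}) to a uniform statement on all of $X$; the device is exactly to restrict to a compact rectangle, where the weight is bounded below away from zero and the weighted and ordinary suprema are comparable. The only genuinely substantive ingredient behind the scenes is that $\omega(f,C_{N_1,N_2};\delta_1,\delta_2)\to 0$ as $\delta_1,\delta_2\to 0^+$, which is guaranteed by the uniform continuity of $w_{N_1,N_2}f$ that is built into the definition of the space $C_{N_1,N_2}$.
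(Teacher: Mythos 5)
Your proposal is correct and takes essentially the same route the paper intends: the paper offers no separate proof of this theorem, presenting it simply as a consequence of Theorem \ref{T3} together with the remark that $\omega(f,C_{N_1,N_2};\delta'_m,\delta'_n)\to 0$ as $\delta'_m,\delta'_n\to 0^{+}$. Your filling-in of the details --- dividing by the strictly positive weight $w_{N_1,N_2}(x,y)$ at a fixed point for the pointwise claim (\ref{E21}), and using the uniform bounds $\delta'_m(x)\le\sqrt{l_1(1+l_1)/m}$, $\delta'_n(y)\le\sqrt{l_2(1+l_2)/n}$ together with the positive minimum of $w_{N_1,N_2}$ on the compact rectangle $X_1$ for the uniform claim --- is exactly the intended argument.
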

Also, one more consequence of the above Theorem \ref{T3} is as follows
\begin{corollary}
If $f\in Lip\left(C_{N_1,N_2};\alpha,\beta \right)$ for some $N_1,N_2\in \mathbb{N}\cup\{0\}$ with $\alpha>0, \beta\geq 1$ then for every  $x,y\in X$, it holds 
\begin{eqnarray*}
_{N_1,N_2}\|\Hat{Y}_{m,n,a}(f;x,y)-f(x,y)\|_{N_1,N_2}=O\left((\delta'_m)^{\alpha}+ (\delta'_n)^{\beta} \right),~~\text{as}~m,n,\to\infty.
\end{eqnarray*}
\end{corollary}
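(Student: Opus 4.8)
The plan is to read this off directly from Theorem~\ref{T3} combined with the defining property of the class in Definition~\ref{def1}. Since membership $f\in Lip(C_{N_1,N_2};\alpha,\beta)$ presupposes $f\in C_{N_1,N_2}(X)$, the hypothesis of Theorem~\ref{T3} is met, so I would first invoke it to obtain the fixed constant $\mathcal{M}_9(N_1,N_2)$ with
\[
w_{N_1,N_2}\|\Hat{Y}_{m,n,a}(f;x,y)-f(x,y)\|_{N_1,N_2}\leq \mathcal{M}_9(N_1,N_2)\,\omega(f,C_{N_1,N_2};\delta'_m,\delta'_n).
\]
This collapses the whole problem to estimating the modulus of continuity appearing on the right, evaluated at the specific arguments $\delta'_m(x)=\sqrt{x(1+x)/m}$ and $\delta'_n(y)=\sqrt{y(1+y)/n}$.

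Next I would verify that these arguments lie in the asymptotic regime required by the Lipschitz definition. For each fixed $x,y\in X$ one has $\delta'_m(x)\to 0^{+}$ and $\delta'_n(y)\to 0^{+}$ as $m,n\to\infty$, since the numerators are fixed and the denominators grow. Hence the pair $(\delta'_m,\delta'_n)$ enters the range $\delta_1,\delta_2\to 0^{+}$ in which $f\in Lip(C_{N_1,N_2};\alpha,\beta)$ guarantees $\omega(f,C_{N_1,N_2};\delta_1,\delta_2)=O(\delta_1^{\alpha}+\delta_2^{\beta})$. Substituting $\delta_1=\delta'_m$ and $\delta_2=\delta'_n$ then yields
\[
\omega(f,C_{N_1,N_2};\delta'_m,\delta'_n)=O\big((\delta'_m)^{\alpha}+(\delta'_n)^{\beta}\big).
\]

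Finally I would feed this back into the bound from Theorem~\ref{T3}. Because $\mathcal{M}_9(N_1,N_2)$ is a constant independent of $m$ and $n$, multiplying the $O$-term by it does not change the order of magnitude, so
\[
w_{N_1,N_2}\|\Hat{Y}_{m,n,a}(f;x,y)-f(x,y)\|_{N_1,N_2}=O\big((\delta'_m)^{\alpha}+(\delta'_n)^{\beta}\big),\qquad m,n\to\infty,
\]
which is precisely the asserted estimate. I do not expect a genuine obstacle here: the statement is an immediate corollary, and the only point deserving a word of care is checking that $\delta'_m,\delta'_n\to 0^{+}$ so that the $O$-estimate built into the definition of $Lip(C_{N_1,N_2};\alpha,\beta)$ is actually activated; the remaining steps are a substitution and the observation that a fixed constant factor preserves the $O$-order.
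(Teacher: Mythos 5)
Your proposal is correct and follows exactly the route the paper intends: the paper states this corollary as an immediate consequence of Theorem~\ref{T3}, combining its bound with the defining $O\left(\delta_1^{\alpha}+\delta_2^{\beta}\right)$ property of $Lip\left(C_{N_1,N_2};\alpha,\beta\right)$ evaluated at $\delta'_m,\delta'_n\to 0^{+}$. Your added care in checking that $\delta'_m(x),\delta'_n(y)\to 0^{+}$ as $m,n\to\infty$, so that the Lipschitz-class estimate is actually applicable, is the only substantive step, and it is handled correctly.
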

\section{Voronovskaya type theorem}\label{se4}

\begin{theorem}
Let $f\in C_{N_1,N_2}^2(X)$,  for every $(x,y)\in X$, then the  Voronovskaya  type formula is as
\begin{eqnarray*}
\underset{m\to\infty}\lim m\{\Hat{Y}_{m,n,a}(f;x,y)-f(x,y)\}=\frac{-x\log{a}}{2}f_x(x,y)+\frac{-y\log{a}}{2}f_y(x,y)+\frac{x}{2}f_{xx}(x,y)+\frac{y}{2}f_{yy}(x,y).
\end{eqnarray*} 
\end{theorem}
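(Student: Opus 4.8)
The plan is to expand $f$ about $(x,y)$ by Taylor's theorem to second order, apply the linearity and positivity of $\Hat{Y}_{m,n,a}$, and evaluate each piece using the moment formulas of Lemma \ref{L2} together with the scaled-moment limits established above. Concretely, since $f\in C_{N_1,N_2}^2(X)$ I would write
\begin{eqnarray*}
f(t,s)&=&f(x,y)+f_x(x,y)(t-x)+f_y(x,y)(s-y)\\
&&+\tfrac12 f_{xx}(x,y)(t-x)^2+f_{xy}(x,y)(t-x)(s-y)+\tfrac12 f_{yy}(x,y)(s-y)^2+r(t,s),
\end{eqnarray*}
where the remainder obeys $|r(t,s)|\leq \Phi(t,s)\big((t-x)^2+(s-y)^2\big)$ with $\Phi$ bounded, continuous, and $\Phi(x,y)=0$ (set $\Phi=|r|/((t-x)^2+(s-y)^2)$ off the diagonal and $\Phi(x,y)=0$; the $C^2$ hypothesis makes this continuous). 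Applying $\Hat{Y}_{m,n,a}(\,\cdot\,;x,y)$ term by term and multiplying through by $m$ reduces everything to the limits of the images of the monomials $(t-x)$, $(s-y)$, $(t-x)^2$, $(s-y)^2$, $(t-x)(s-y)$ and of $m\,\Hat{Y}_{m,n,a}(r;x,y)$.

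For the linear and quadratic pieces I would invoke the scaled-moment limits directly: $m\,\Hat{Y}_{m,n,a}((t-x);x,y)\to-\tfrac12 x\log a$ and $m\,\Hat{Y}_{m,n,a}((t-x)^2;x,y)\to x$, together with their $s$-analogues (which hold with $n$ in place of $m$, so that along the diagonal $m=n$ they furnish the $f_y$ and $f_{yy}$ contributions). The mixed term is harmless: by the product structure of the operator in (\ref{2}) one has $\Hat{Y}_{m,n,a}((t-x)(s-y);x,y)=\Hat{\mathcal{R}}_{m,a}((t-x);x)\,\Hat{\mathcal{R}}_{n,a}((s-y);y)$, and each factor is $O(1/m)$, $O(1/n)$ by Lemma \ref{L2}, so that $m$ times the product still tends to $0$ and no $f_{xy}$ term survives. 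Collecting these limits reproduces exactly the four terms on the right-hand side.

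The main obstacle is the remainder estimate $m\,\Hat{Y}_{m,n,a}(r;x,y)\to 0$. Here I would use the Cauchy--Schwarz inequality for the positive operator,
\begin{eqnarray*}
m\,\big|\Hat{Y}_{m,n,a}(r;x,y)\big|&\leq& m\,\Hat{Y}_{m,n,a}\!\big(\Phi(t,s)\big((t-x)^2+(s-y)^2\big);x,y\big)\\
&\leq& m\,\big(\Hat{Y}_{m,n,a}(\Phi^2;x,y)\big)^{1/2}\big(\Hat{Y}_{m,n,a}(((t-x)^2+(s-y)^2)^2;x,y)\big)^{1/2}.
\end{eqnarray*}
The second factor is $O(1/m)$: expanding the square and using $m^2\,\Hat{Y}_{m,n,a}((t-x)^4;x,y)\to 3x^2$ (and its $s$-analogue), together with the factorization of the cross moment $\Hat{Y}_{m,n,a}((t-x)^2(s-y)^2;x,y)=\Hat{\mathcal{R}}_{m,a}((t-x)^2;x)\,\Hat{\mathcal{R}}_{n,a}((s-y)^2;y)$, shows $\Hat{Y}_{m,n,a}(((t-x)^2+(s-y)^2)^2;x,y)=O(1/m^2)$. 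The first factor tends to $0$, because $\Phi^2$ is bounded continuous with $\Phi^2(x,y)=0$, so the pointwise convergence of Theorem \ref{TH3} gives $\Hat{Y}_{m,n,a}(\Phi^2;x,y)\to\Phi^2(x,y)=0$. Hence the product is $m\cdot o(1)\cdot O(1/m)=o(1)$, which kills the remainder and finishes the proof. The only point needing care is admissibility in the weight space: since $f\in C_{N_1,N_2}^2$ controls the polynomial growth of $f$ and its second derivatives, $r$ is dominated by $w_{N_1,N_2}^{-1}$ up to a constant, so all the series above converge and Theorem \ref{TH3} legitimately applies to $\Phi^2$.
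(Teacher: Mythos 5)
Your proposal is correct and follows essentially the same route as the paper: a second-order Taylor expansion with a Peano-type remainder, evaluation of the linear and quadratic terms via the scaled moment limits, factorization of the mixed moment so that no $f_{xy}$ contribution survives, and a Cauchy--Schwarz estimate on the remainder combining the fourth-moment decay $O(1/m^2)$ with the pointwise convergence $\Hat{Y}_{m,n,a}(\Phi^2;x,y)\to 0$ from Theorem \ref{TH3}. The only cosmetic difference is that the paper writes the remainder as $\xi(t,s)\sqrt{(t-x)^4+(s-y)^4}$ rather than $\Phi(t,s)\bigl((t-x)^2+(s-y)^2\bigr)$, which leads to the same estimates.
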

\begin{proof}
Let $(x_0,y_0)\in X$ be a fixed point, then by Taylor formula
\begin{eqnarray}\label{E1}
\nonumber f(t,s)&=& f(x_0,y_0)+f_x(x_0,y_0)(t-x_0)+f_y(x_0,y_0)(s-y_0)\\\nonumber&&+\frac{1}{2}\{ f_{xx}(x_0,y_0)(t-x_0)^2+ 2f_{xy}(x_0,y_0)(t-x_0)(s-y_0)+ f_{yy}(x_0,y_0)(s-y_0)^2\}\\ &&+\xi(t,s;x_0,y_0)\sqrt{(t-x_0)^4+(s-y_0)^4},
\end{eqnarray}

where, $(t,s)\in X$ and $\xi(t,s)=\xi(t,s;x_0,y_0)\in C_{N_1,N_2}(X)$ and also $\underset{(t,s)\to(x_0,y_0)}\lim\xi(t,s)=0$.
Now applying the bivariate operators (\ref{R1}) on above equation (\ref{E1}), one has

\begin{eqnarray}\label{E2}
\nonumber \Hat{Y}_{m,m,a}(f;x_0,y_0)-f(x_0,y_0)&=& f_x(x_0,y_0)\Hat{Y}_{m,m,a}((t-x_0);x_0)+f_y(x_0,y_0)\Hat{Y}_{m,m,a}((s-y_0);y_0)\\\nonumber&&+\frac{1}{2}\{ f_{xx}(x_0,y_0)\Hat{Y}_{m,m,a}((t-x_0)^2;x_0)+ 2f_{xy}(x_0,y_0)\Hat{Y}_{m,m,a}((t-x_0);x_0)\Hat{Y}_{m,m,a}((s-y_0);y_0)\\ &&+ f_{yy}(x_0,y_0)\Hat{Y}_{m,m,a}((s-y_0)^2;y_0)\}+\Hat{Y}_{m,m,a}(\xi(t,s);x_0,y_0)\sqrt{(t-x_0)^4+(s-y_0)^4};x_0,y_0).
\end{eqnarray}
using above lemma (\ref{L23}), we have

\begin{eqnarray}\label{E3}
\underset{m\to\infty}\lim{m \Hat{Y}_{m,m,a}((t-x_0);x_0))}=-\frac{1}{2} x_0 \log (a), \underset{m\to\infty}\lim{m(\Hat{Y}_{m,m,a}((s-y_0);y_0))}=-\frac{1}{2} y_0 \log (a)
\end{eqnarray}
\begin{eqnarray}\label{E4}
\underset{m\to\infty}\lim{m(\Hat{Y}_{m,m,a}((t-x_0)^2;x_0))}=x_0, \underset{m\to\infty}\lim{m(\Hat{Y}_{m,m,a}((s-y_0)^2;y_0))}=y_0
\end{eqnarray}
Now applying H$\ddot{\text{o}}$lder inequality, we have
\begin{eqnarray}\label{E5}
\nonumber|\Hat{Y}_{m,m,a}(\xi(t,s;x_0,y_0)\sqrt{(t-x_0)^4(s-y_0)^4};x_0,y_0)| &\leq & \{\Hat{Y}_{m,m,a}(\xi^2(t,s);x_0,y_0)\}^{\frac{1}{2}}\\&&\times \{\Hat{Y}_{m,m,a}((t-x_0)^4;x_0)+\Hat{Y}_{n,a}((s-y_0)^4;y_0)\}^{\frac{1}{2}},
\end{eqnarray}
using the property of $\xi$ and Theorem \ref{TH3}, one has as:

\begin{eqnarray}
\underset{n\to\infty}\lim \Hat{Y}_{m,m,a}(\xi^2(t,s);x_0,y_0))=\xi^2(x_0,y_0)=0
\end{eqnarray}
in next one step, we use Lemma \ref{L23} and reach to
\begin{eqnarray}\label{E6}
\underset{n\to\infty}{\lim} m\Hat{Y}_{m,m,a}(\xi(t,s);x_0,y_0)\sqrt{(t-x_0)^4+(s-y_0)^4};x_0,y_0)=0.
\end{eqnarray}
Using (\ref{E2},\ref{E3},\ref{E4},\ref{E5},\ref{E6}), we reproduce \ref{E1}
\begin{eqnarray*}
\underset{m\to\infty}\lim m\{\Hat{Y}_{m,m,a}(f;x,y)-f(x,y)\}=\frac{-x\log{a}}{2}f_x(x,y)+\frac{-y\log{a}}{2}f_y(x,y)+\frac{x}{2}f_{xx}(x,y)+\frac{y}{2}f_{yy}(x,y).
\end{eqnarray*} 
Thus the proof is completed.
\end{proof}

\section{Simultaneous approximation}\label{se5}
\begin{theorem}
For $f\in C_{N_1,N_2}^1$, for all $m\in \mathbb{N}$, then for each $(x,y)\in X$, we approach to:
\begin{eqnarray}
\underset{m\to\infty}{\lim}\frac{\partial }{\partial x}(\Hat{Y}_{m,m,a}(f;x,y))=\frac{\partial f(x,y)}{\partial x},
\end{eqnarray}

\begin{eqnarray}
\underset{m\to\infty}{\lim}\frac{\partial }{\partial y}(\Hat{Y}_{m,m,a}(f;x,y))=\frac{\partial f(x,y)}{\partial y}.
\end{eqnarray}
\end{theorem}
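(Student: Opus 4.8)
The plan is to differentiate the operator term-by-term, recognize that $\partial_x \Hat{Y}_{m,m,a}(f;x,y)$ reproduces a Sz\'asz--Mirakjan-type average of $f_x$, and then invoke the uniform approximation already proved in Theorem \ref{TH3}. The starting observation is that each factor of the kernel $s_{m,m}^a(x,y)$ is a Poisson-type mass function: writing $b_m=\dfrac{\log a}{a^{1/m}-1}$, the $x$-factor equals $e^{-b_m x}\dfrac{(b_m x)^{k_1}}{k_1!}$ and the $y$-factor equals $e^{-b_m y}\dfrac{(b_m y)^{k_2}}{k_2!}$. Consequently the elementary identity
\begin{eqnarray*}
\frac{\partial}{\partial x}\left(e^{-b_m x}\frac{(b_m x)^{k_1}}{k_1!}\right)=b_m\left(e^{-b_m x}\frac{(b_m x)^{k_1-1}}{(k_1-1)!}-e^{-b_m x}\frac{(b_m x)^{k_1}}{k_1!}\right)
\end{eqnarray*}
holds, so differentiating the $k_1$-th basis function yields $b_m$ times the difference of the $(k_1-1)$-th and $k_1$-th basis functions.

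First I would differentiate $\Hat{Y}_{m,m,a}(f;x,y)$ under the summation sign and apply this identity to the $x$-factor. After shifting the index $k_1\mapsto k_1+1$ in the telescoped sum (using that the $k_1=-1$ term vanishes), the partial derivative collapses to
\begin{eqnarray*}
\frac{\partial}{\partial x}\Hat{Y}_{m,m,a}(f;x,y)=b_m\sum\limits_{k_1=0}^{\infty}\sum\limits_{k_2=0}^{\infty}s_{m,m}^a(x,y)\left[f\!\left(\tfrac{k_1+1}{m},\tfrac{k_2}{m}\right)-f\!\left(\tfrac{k_1}{m},\tfrac{k_2}{m}\right)\right].
\end{eqnarray*}
Next I would apply the mean value theorem in the first variable to each bracket, giving $f\!\left(\tfrac{k_1+1}{m},\tfrac{k_2}{m}\right)-f\!\left(\tfrac{k_1}{m},\tfrac{k_2}{m}\right)=\tfrac{1}{m}f_x(\xi_{k_1},\tfrac{k_2}{m})$ for some $\xi_{k_1}\in\left(\tfrac{k_1}{m},\tfrac{k_1+1}{m}\right)$, so that the derivative becomes $\dfrac{b_m}{m}\sum_{k_1,k_2}s_{m,m}^a(x,y)\,f_x(\xi_{k_1},\tfrac{k_2}{m})$.

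Two limits then finish the argument. The scalar prefactor satisfies $\dfrac{b_m}{m}=\dfrac{\log a}{m(a^{1/m}-1)}\to 1$ as $m\to\infty$, by the same L'Hospital computation used for the limits of the central moments. For the sum, I would compare it against $\Hat{Y}_{m,m,a}(f_x;x,y)$, which tends to $f_x(x,y)$ by Theorem \ref{TH3}, since $f\in C_{N_1,N_2}^1$ gives $f_x\in C_{N_1,N_2}$. The discrepancy between the two is dominated by $\sum_{k_1,k_2}s_{m,m}^a(x,y)\,\left|f_x(\xi_{k_1},\tfrac{k_2}{m})-f_x(\tfrac{k_1}{m},\tfrac{k_2}{m})\right|$, which tends to $0$ because $\xi_{k_1}$ lies within $1/m$ of $k_1/m$ and $f_x$ is uniformly continuous in the weighted sense. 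Combining these gives $\partial_x\Hat{Y}_{m,m,a}(f;x,y)\to f_x(x,y)$; the statement for $\partial_y$ follows verbatim by symmetry between the two factors of the kernel.

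The main obstacle I anticipate is the rigorous justification of the term-by-term differentiation together with the uniform-continuity estimate in the polynomial weight space: one must verify that the differentiated series converges uniformly on compact subsets of $X$ (so that interchange of $\partial_x$ and the double sum is legitimate), and that the shift error $\left|f_x(\xi_{k_1},\tfrac{k_2}{m})-f_x(\tfrac{k_1}{m},\tfrac{k_2}{m})\right|$ can be controlled uniformly in $k_1$ after multiplication by the weight $w_{N_1,N_2}$. This requires exploiting the polynomial-growth bound on $f_x$ and the moment and second-moment estimates of Lemma \ref{L2} and Lemma \ref{L23} to dominate the tails of the series; once these technical bounds are in place the remaining steps are routine.
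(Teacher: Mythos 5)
Your proposal is correct, but it takes a genuinely different route from the paper. The paper's proof starts from a first-order Taylor expansion $f(t,s)=f(x,y)+f_x(x,y)(t-x)+f_y(x,y)(s-y)+\xi(t,s;x,y)\sqrt{(t-x)^2+(s-y)^2}$, combines it with the algebraic identity $\frac{\partial }{\partial x}\Hat{Y}_{m,m,a}(f;x,y)= -\frac{ \log (a)}{ a^{\frac{1}{m}}-1}\Hat{Y}_{m,m,a}(f;x,y)+\frac{m}{x}\,\Hat{Y}_{m,m,a}(tf(t,s);x,y)$, and then evaluates the resulting mixed moments explicitly (quoting Mathematica) and controls the Peano remainder with H\"older's inequality. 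You instead differentiate the Poisson-type kernel term by term, obtain the forward-difference representation $\partial_x\Hat{Y}_{m,m,a}(f;x,y)=b_m\sum_{k_1,k_2}s_{m,m}^a(x,y)\left[f\!\left(\tfrac{k_1+1}{m},\tfrac{k_2}{m}\right)-f\!\left(\tfrac{k_1}{m},\tfrac{k_2}{m}\right)\right]$ with $b_m=\tfrac{\log a}{a^{1/m}-1}$, apply the mean value theorem, and reduce the statement to the pointwise convergence $\Hat{Y}_{m,m,a}(f_x;x,y)\to f_x(x,y)$, i.e.\ to Theorem \ref{TH3} applied to $f_x\in C_{N_1,N_2}$. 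Your route buys several things: it sidesteps the factor $\frac{m}{x}$ in the paper's identity (which is singular at $x=0$), it needs no remainder function $\xi$ and no machine-computed moments, it recycles a theorem already proved, and it extends naturally to higher-order partial derivatives; the paper's route, by contrast, is closer in spirit to its Voronovskaya section and produces explicit moment asymptotics along the way.

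One step you flag does need the care you anticipate, and it is worth recording that it closes. Weighted uniform continuity of $f_x$ does not directly bound $\left|f_x(\xi_{k_1},\tfrac{k_2}{m})-f_x(\tfrac{k_1}{m},\tfrac{k_2}{m})\right|$, since $f_x$ itself may grow polynomially. Writing $h=w_{N_1,N_2}f_x$ (bounded, uniformly continuous) and $\tfrac{1}{w_{N_1,N_2}(u,v)}=(1+u^{N_1})(1+v^{N_2})$, one gets for $\xi_{k_1}\in\left(\tfrac{k_1}{m},\tfrac{k_1+1}{m}\right)$
\begin{eqnarray*}
\left|f_x\!\left(\xi_{k_1},\tfrac{k_2}{m}\right)-f_x\!\left(\tfrac{k_1}{m},\tfrac{k_2}{m}\right)\right| &\leq & \left(1+\left(\tfrac{k_2}{m}\right)^{N_2}\right)\Bigg\{\omega_h\!\left(\tfrac{1}{m}\right)\left(1+\left(\tfrac{k_1+1}{m}\right)^{N_1}\right)+\|h\|_\infty \tfrac{N_1}{m}\left(\tfrac{k_1+1}{m}\right)^{N_1-1}\Bigg\},
\end{eqnarray*}
where $\omega_h$ is the ordinary modulus of continuity of $h$. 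Multiplying by $s_{m,m}^a(x,y)$ and summing, the right-hand side is the operator applied to fixed polynomials in $(t,s)$, which for fixed $(x,y)$ stays bounded as $m\to\infty$ (this is exactly the content of Lemma \ref{L10}, part 1, together with the moment formulas of Lemma \ref{L1}), so the discrepancy is $O\!\left(\omega_h(1/m)\right)+O(1/m)\to 0$. Term-by-term differentiation is also legitimate: for fixed $m$ the series is $e^{-b_mx}$ times a power series in $x$ whose coefficients grow at most polynomially in $k_1$, hence it and its formal derivative converge locally uniformly. With these two points supplied, your argument is complete.
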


\begin{proof}
By using Taylor's theorem,
\begin{eqnarray}\label{E7}
f(t,s)=f(x,y)+f_x(x,y)(t-x)+f_y(x,y)(s-y)+\xi(t,s;x,y)\sqrt{(t-x)^2+(s-y)^2}.
\end{eqnarray}

%
Since, the partial derivative of the bivariate operators \ref{R1}, one can derive as:
\begin{eqnarray}\label{E8}
\frac{\partial }{\partial x}(\Hat{Y}_{m,m,a}(f;x,y)= -\frac{ \log (a)}{ \left(a^{\frac{1}{m}}-1\right)}\Hat{Y}_{m,m,a}(f(t,s);x,y)+\frac{m}{x}~\Hat{Y}_{m,m,a}(tf(t,s);x,y).
\end{eqnarray}
Applying the bivriate operators $\Hat{Y}_{m,m,a}(f;x,y)$ \ref{R1} and using \ref{E8}, we have

\begin{eqnarray}\label{E9}
\nonumber\frac{\partial }{\partial x}(\Hat{Y}_{m,m,a}(f;x,y)&=& -\frac{ \log (a)}{ \left(a^{\frac{1}{m}}-1\right)} \Bigg(f(x,y)+f_x(x,y)\Hat{Y}_{m,m,a}((t-x);x,y)+f_y(x,y)\Hat{Y}_{m,m,a}((s-y);x,y)\\\nonumber
&&+\Hat{Y}_{m,m,a}(\xi(t,s;x,y)\sqrt{(t-x)^2+(s-y)^2};x,y) \Bigg)+\frac{m}{x}\Bigg(f(x,y)\Hat{Y}_{m,m,a}(t;x,y)\\\nonumber
&&+f_x(x,y)\Hat{Y}_{m,m,a}(t(t-x);x,y)+f_y(x,y)\Hat{Y}_{m,m,a}(t(s-y);x,y)\\
&&+\Hat{Y}_{m,m,a}(t~\xi(t,s;x,y)\sqrt{(t-x)^2+(s-y)^2};x,y) \Bigg),
\end{eqnarray}
with the help of Lemma \ref{L1} and \ref{L2}, we get
\begin{eqnarray*}
\Hat{Y}_{m,m,a}(t(s-y);x,y)&=& \Hat{Y}_{m,m,a}(t;x,y)\times\Hat{Y}_{m,m,a}((s-y);x,y)\\&=&\frac{-xy \log (a) \left(n a^{\frac{1}{m}}-\log (a)-m\right)}{m^2 \left(a^{\frac{1}{m}}-1\right)^2 },
\end{eqnarray*}
so, we have
\begin{eqnarray}\label{E10}
\nonumber\frac{\partial }{\partial x}(\Hat{Y}_{m,m,a}(f;x,y)&=& f_x(x,y)\left( \frac{\log (a)}{m \left(a^{\frac{1}{m}}-1\right)} \right)+\frac{m}{x}~ \Hat{Y}_{m,m,a}\Bigg( \left(t- \frac{x\log(a)}{ \left( a^{\frac{1}{m}}-1\right)m} \right) \\&& \times ~\xi(t,s;x,y)\sqrt{(t-x)^2+(s-y)^2};x,y\Bigg),
\end{eqnarray}
now 
\begin{eqnarray}\label{E11}
\nonumber&&\Hat{Y}_{m,m,a}\left( \left(t- \frac{x\log(a)}{ \left( a^{\frac{1}{m}}-1\right)m} \right) \xi(t,s;x,y)\sqrt{(t-x)^2+(s-y)^2};x,y\right)\\& &\hspace{1cm} \leq  \left(\Hat{Y}_{m,m,a}  (\xi^2(t,s;x,y);x,y)\right)^{\frac{1}{2}}   \times \left(\Hat{Y}_{m,m,a} \left(\left(t- \frac{x\log(a)}{ \left( a^{\frac{1}{m}}-1\right)m} \right)^2\left((t-x)^2+(s-y)^2\right)\right);x,y\right)^{\frac{1}{2}}.
\end{eqnarray}
Using Matheamatica software, we get following results
\begin{eqnarray}\label{E12}
\nonumber\Hat{Y}_{m,m,a}\left(\left(t- \frac{x\log(a)}{ \left( a^{\frac{1}{m}}-1\right)m} \right)^2(t-x)^2;x,y\right) & = &\frac{1}{\left(a^{\frac{1}{m}}-1\right)^3m^4} \Bigg( x \log (a) \Bigg( \left(a^{\frac{1}{m}}-1\right)^2 (m x-1)^2\\
&&-x \left(a^{\frac{1}{m}}-1\right) \log (a) (2 m x-5)+x^2 (\log (a))^2\Bigg)\Bigg),
\end{eqnarray}
also, 
\begin{eqnarray}\label{E13}
\nonumber\Hat{Y}_{m,m,a}\left(\left(t- \frac{x\log(a)}{ \left( a^{\frac{1}{m}}-1\right)m} \right)^2(s-y)^2;x,y\right) & = & \frac{x y \log (a)}{m^4 \left(a^{\frac{1}{m}}-1\right)^3}\Bigg(m^2 y \left(a^{\frac{1}{m}}-1\right)^2\\
&&-\left(a^{\frac{1}{m}}-1\right) \log (a) (2 m y-1)+y \log ^2(a) \Bigg),
\end{eqnarray}
where $$\underset{m\to\infty}\lim\frac{\log(a)}{ \left( a^{\frac{1}{m}}-1\right)m}=1.$$
Now,  by simple calculations, we get
\begin{eqnarray}\label{E14}
\nonumber\underset{m\to\infty}\lim m^2\Hat{Y}_{m,m,a}\left(\left(t- \frac{x\log(a)}{ \left( a^{\frac{1}{m}}-1\right)m} \right)^2(t-x)^2;x,y\right) & = & \underset{m\to\infty}\lim \frac{1}{\left(a^{\frac{1}{m}}-1\right)^3m^2} \Bigg( x \log (a) \Bigg( \left(a^{\frac{1}{m}}-1\right)^2 (m x-1)^2\\\nonumber
&&-x \left(a^{\frac{1}{m}}-1\right) \log (a) (2 m x-5)+x^2 (\log (a))^2\Bigg)\Bigg)\\
&=& 3x^2,
\end{eqnarray}
also,
\begin{eqnarray}\label{E15}
\nonumber\underset{m\to\infty}\lim m^2\Hat{Y}_{m,m,a}\left(\left(t- \frac{x\log(a)}{ \left( a^{\frac{1}{m}}-1\right)m} \right)^2(s-y)^2;x,y\right) & = & \underset{m\to\infty}\lim \Bigg(\frac{x y \log (a)}{m^4 \left(a^{\frac{1}{m}}-1\right)^3}\Bigg(m^2 y \left(a^{\frac{1}{m}}-1\right)^2\\\nonumber
&&-\left(a^{\frac{1}{m}}-1\right) \log (a) (2 m y-1)+y \log ^2(a) \Bigg) \Bigg)\\
&=& xy,
\end{eqnarray}
here, we have 
\begin{eqnarray}\label{E16}
\underset{n\to}\lim \Hat{Y}_{m,m,a}  (\xi^2(t,s;x,y);x,y)=\xi^2(x,y))=0,
\end{eqnarray}
using (\ref{E10}-\ref{E16}) in \ref{E9}, which produce as
\begin{eqnarray*}
\underset{m\to\infty}{\lim}\frac{\partial }{\partial x}(\Hat{Y}_{m,m,a}(f;x,y))=\frac{\partial f(x,y)}{\partial x},
\end{eqnarray*} 
by same process, one can obtain the second result, which as follows:
\begin{eqnarray*}
\underset{m\to\infty}{\lim}\frac{\partial }{\partial y}(\Hat{Y}_{m,m,a}(f;x,y))=\frac{\partial f(x,y)}{\partial y}.
\end{eqnarray*}
Thus, the proof is completed.
\end{proof}
\pagebreak

\begin{example}
Consider a function $g(x,y)=x^2e^{x+y}$(green), we shall observe that the partial derivative of the bivariate operators \ref{R1} converge to the partial derivative of the function $g(x,y)$ by graphical representation. By taking $m=n=10,~30$ and corresponding derivative operators $\frac{\partial }{\partial x}\Hat{Y}_{10,10,a}(f;x,y)$ (red), $\frac{\partial }{\partial x}\Hat{Y}_{30,30,a}(f;x,y)$ (blue) and the graphical representation is shown in Figure \ref{F6}. 
 \linebreak
 \begin{figure}[h!]
    \centering 
    \includegraphics[width=.52\textwidth]{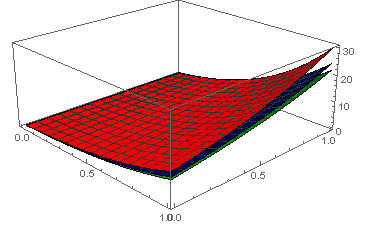}   
    \caption[Description in LOF, taken from~\cite{source}]{The convergence of the partial derivative of the operators $\Hat{Y}_{m,n,a}(f;x,y)$ with respect to $x$ to the partial derivative of the function $f(x,y)$(green)}
    \label{F6}
\end{figure}
\end{example}

\textbf{Concluding remark:} The above Figure \ref{F6}, represents the convergence of the partial derivative of the bivariate operators defined by \ref{R1}, with respect to $x$ to the partial derivative of the function w.r.t. $x$, i.e. for the large value of the $m,n$, the partial derivative of the bivariate operators converge to the partial derivative of the function. By the above Figure \ref{F6}, the error of the operators to the function will be very least when the value of $m,n$ is increased.

\section{Tabular and graphical representation}\label{se6}
In this section, it will be determined the approximation by the bivariate operators (\ref{R1}) for the function defined on $X=[0,\infty)\times[0,\infty)$ with the help of tables as well as graphical representation for different values of $m=n$.  
\begin{example}
By considering $\Hat{A}_{m,n,a}f=|\Hat{Y}_{m,n,a}(f;x,y)-f(x,y)|$, we find the numerical error by defined operators of the function $f(x,y)=xy$ at different points for different values of $n=m$. And also we can see, the error is least when the value of $m$ is increased, i.e. for the large value of $m,~n$, the bivariate operators will converge to the function. The numerical errors are given below, and illustrated in Table \ref{r5}.
\end{example}
\begin{center}
\begin{table}[h!]
\caption{Error bound of the operators $\Hat{Y}_{m,m,a}(f;x,y)$ to the function $f(x,y)$}\label{r5}
\begin{scriptsize}
\begin{tabular}{|c|c|c|c|c|c|c|c|c|c|}
\hline 
(x,y) & $\Hat{A}_{5,5,a}f$ & $\Hat{A}_{15,15,a}f$ & $\Hat{A}_{35,35,a}f$ & $\Hat{A}_{50,50,a}f$ & $\Hat{A}_{80,80,a}f$ & $\Hat{A}_{120,120,a}f$ & $\Hat{A}_{275,275,a}f$ & $\Hat{A}_{350,350,a}f$ & $\Hat{A}_{500,n,a}f$ \\ 
\hline 
(.1,.1) & .000542559 & .000183666 & .0000790616 & .000055398 & .0000346538 & .0000231137 & .0000100915 & 7.92971$\times 10^{-6}$  & 5.55135$\times 10^{-6}$ \\ 
\hline 
(.1,.3) & .00162768 & .000550997 & .000237185 & .000166194 & .000103961 & .0000693411 & .0000302744 & .0000237891 & .0000166541 \\ 
\hline 
(.1,.6) & .00325536 & .00110199 & .00047437 & .000332388 & .000207923 & .000138682 & .0000605487 & .0000475783 & .0000333081 \\ 
\hline 
(.2,.1) & .00108512 & .000367331 & .000158123 & .000110796 & .0000693076 & .0000462274 & .0000201829 & .0000158594 & .0000111027 \\ 
\hline 
(.2,.4) & .00434047 & .00146933 & .000632493 & .000443184 & .000277231 & .000184909 & .0000807316 & .0000634377 & .0000444108 \\ 
\hline 
(.3,.4) & .00651071 & .00220399 & .000948739 & .000664776 & .000415846 & .000277364 & .000121097 & .0000951566 & .0000666162 \\ 
\hline 
(.6,.2) & .00651071 & .00220399 & .000948739 & .000664776 & .000415846 & .000277364 & .000121097 & .0000951566 & .0000666162 \\ 
\hline 
(.7,.8) & .0303833 & .0102853 & .00442745 & .00310229 & .00194061 & .00129437 & .000565121 & .000444064 & .000310876 \\ 
\hline 
(.8,.7) & .0303833 & .0102853 & .00442745 & .00310229 & .00194061 & .00129437 & .000565121 & .000444064 & .000310876 \\ 
\hline 
(.9,.3) & .0146491 & .00495897 & .00213466 & .00149575 & .000935653 & .00062407 & .000272469 & .000214102 & .000149886 \\ 
\hline 
(.9,.9) & .0439473 & .0148769 & .00640399 & .00448724 & .00280696 & .00187221 & .000817408 & .000642307 & .000449659 \\ 
\hline 
\end{tabular} 
\end{scriptsize}
\end{table}
\end{center}

\begin{example}
Here function is $f(x,y)=x^4y(x-1)^4\sin{2\pi y}$ (green), choosing $n=m=10,~50,~100$ and fixed $a=1.32$, then corresponding operators are $\Hat{Y}_{10,10,a}(f;x,y)$ (yellow), $\Hat{Y}_{50,50,a}(f;x,y)$ (red), $\Hat{Y}_{100,100,a}(f;x,y)$ (blue),  we can see that as we increase the value of $m,n$, the convergence becomes better.  The representation is illustrated in Figure \ref{F5}.

\begin{figure}[h!]
    \centering 
    \includegraphics[width=.82\textwidth]{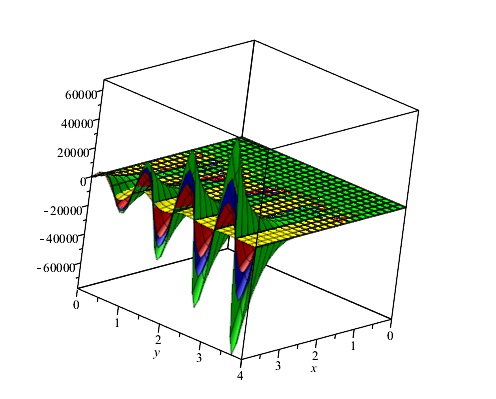}   
    \caption[Description in LOF, taken from~\cite{source}]{The convergence of the bivariate operators $\Hat{Y}_{m,n,a}(f;x,y)$ to the function $f(x,y)$.}
    \label{F5}
\end{figure}
\end{example}

\textbf{Graphical conclusion:} By Figure \ref{F5}, one can observe that the better approximation takes place when the values of $m,n$ (while $m=n$) are increased. After the certain value of $m,n$, the operators converge to the function. 


\begin{example}
Taking $f(x,y)=x^3y + 6y^2 + x^2$, for different values of $m,n$, we get the numerical error of bivariate operators \ref{R1} and \ref{S} to the function $f(x,y)$. We can observe that the numerical error is least comparison to bivariate Sz$\acute{\text{a}}$sz-Mirakjan operators, for fixed $a=1.30$, and is illustrated by given Table \ref{r6}.
\begin{table}[h!]
\caption{The comparison of the bivariate operators  $\Hat{Y}_{m,n,a}(f;x,y)$ and $S_{m,n,a}(f;x,y)$ operators}\label{r6}
\begin{tabular}{ |p{.75cm}|p{3.8cm} |p{3.4cm}|}
\hline
\multicolumn{3}{|c|}{The comparison of the bivariate operators  $\Hat{Y}_{m,n,a}(f;x,y)$ and $S_{m,n,a}(f;x,y)$ operators } \\
\hline
m, n & $|{\Hat{Y}_{m,n,a}(f;x,y)-f(x,y)}|$  & $|{S_{m,n}(f;x,y)-f(x,y)}|$ \\
\hline
3 & 0.219254 & 0.235444  \\
\hline
 13 & 0.0521816 & 0.0541361  \\
\hline
 23 & 0.0296115 & 0.0305841  \\
\hline
 33 & 0.0206706 & 0.0213122  \\
\hline
 53 & 0.0128878 & 0.0132677  \\
\hline
 103 & 0.00663879 &  0.00682619  \\
\hline
 163 & 0.00419684 &  0.00431326  \\
\hline
 333 & 0.00205507 &  0.0021112  \\
\hline
 543 & 0.00126047 & 0.00129469  \\
\hline
\end{tabular}
\end{table}
\end{example}
\begin{example}
This example contains the comparison of the proposed bivariate \ref{R1}, for a fixed value of $a=1.32$ with bivariate Sz$\acute{\text{a}}$sz-Mirakjan operators \ref{S} by graphical representation, while having numerical value $10$ for $m=n$ and choosing function $f(x,y)=y^2e^{2x}$, we get the  better rate of convergence, is illustrated by Figure \ref{F7}.
\begin{figure}[h!]
    \centering 
    \includegraphics[width=.50\textwidth]{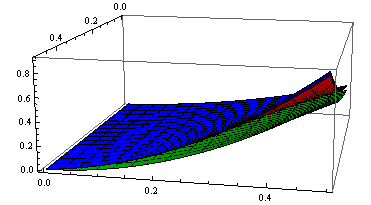}   
    \caption[Description in LOF, taken from~\cite{source}]{The comparison of the proposed bivariate operators  $\Hat{Y}_{10,10,a}(f;x,y)$ (red) and $S_{10,10}(f;x,y)$ (blue) to the function $f(x,y)$ (green)}
    \label{F7}
\end{figure}
\end{example}
\pagebreak
\textbf{Concluding Remark:} By above Figure \ref{F7}, we observe that  the approach of the proposed bivariate operators $\Hat{Y}_{m,n,a}(f;x,y)$  is better than the operators $S_{m,n}(f;x,y)$.

\section{Application}
As applications of the defined operators (\ref{R1}), one can generalize into complex extension of the said operators for the function $f:[0,\infty)\times[0,\infty)\to \mathbb{C}\times\mathbb{C}$, while our operators are evidence in the real case. In the real case, one can define the Stancu variant for function of two variables and find the better rate of the convergence as well fast approximation result. Better approximation can be obtained and approximation results are discussed in various field like wavelets and frame, number theory, theoretical physics as well as in the field of quantum calculus by modifying the said operators with $q$-integers.


\begin{thebibliography}{10}
\bibitem{UAM}  Abel U,  Ivan M,  Zeng XM. Asymptotic Expansion for Sz$\acute{\text{a}}$sz-Mirakjan Operators. In AIP Conference Proceedings 2007 Sep 6 (Vol. 936, No. 1, pp. 779-782). AIP.
\bibitem{OA}  Agratini  O. Approximation properties of a class of linear operators. Mathematical Methods in the Applied Sciences. 2013 Nov 30;36(17):2353-8.
\bibitem{OA} Agratini O. On an approximation process of integral type. Applied Mathematics and Computation. 2014 Jun 1;236:195-201.
\bibitem{TA1}  Acar  T. Asymptotic formulas for generalized Sz$\acute{\text{a}}$sz-Mirakyan operators. Applied Mathematics and Computation. 2015 Jul 15;263:233-9.
\bibitem{TAL}  Acar T,   Mishra LN and  Mishra VN. Simultaneous approximation for generalized Srivastava-Gupta operators. Journal of Function Spaces. 2015;2015, Article ID 936308, 11 pages. DOI:                         10.1155/2015/936308.
\bibitem{MB1}  Becker M. Global approximation theorems for Sz$\acute{\text{a}}$sz-Mirakjan and Baskakov operators in polynomial weight spaces. Indiana University Mathematics Journal 1978; 27:127-142.
\bibitem{MB2}  Becker M, Kucharski D, Nessel RJ.  Global approximation theorems for Sz$\acute{\text{a}}$sz-Mirakjan and Baskakov operators in exponential weight spaces. (Proc. Conf. Oberwolfach), Linear Spaces and Approximation, Birkh$\tilde{\text{a}}$user Verlag, Basel, 1977.
\bibitem{SN1} Bernstien SN. D$\acute{\text{e}}$monstration du th$\acute{\text{e}}$or$\acute{\text{e}}$m de Weierstrass fond$\acute{\text{e}}\acute{\text{e}}$ sur le calcul de probabilit$\acute{\text{e}}$s. Comm. Soc. Math. Kharkov. 1912;13:1-2.
\bibitem{EC1}  Cheney EW, Sharma A. On a generalization of Bernstein polynomials. Riv. Mat. Univ. Parma. 1964;5(2):77-84.
\bibitem{SD1}  Deshwal  S, Ispir N, Agrawal PN. Blending type approximation by bivariate Bernstein-Kantorovich operators. Appl Math Inf Sci. 2017;11(2):423-32.
\bibitem{FDK}  Dirik F and  Demirci K. Modified double Sz$\acute{\text{a}}$sz-Mirakjan operators preserving $ x^{2}+ y^{2} $. Mathematical Communications. 2010 Jun 10;15(1):177-88.
\bibitem{JF} Favard J. Sur les multiplicateurs d’interpolation. J. Math. Pures Appl. 1944;23(9):219-47.
\bibitem{MG1} Goyal M,  Kajla A, Agrawal  PN. Araci S. Approximation by bivariate Bernstein-Durrmeyer operators on a triangle. Appl Math Inf Sci. 2017;11(3):693-702.
\bibitem{TH1} Hermann  T.  On Sz$\acute{\text{a}}$sz-Mirakjan  operators. Acta Mathematica Hungarica 1978; 32(1-2):163-173.
\bibitem{NI1} Ispir N, B$\ddot{\text{u}}$y$\ddot{\text{u}}$kyazici İ. Quantitative estimates for a certain bivariate Chlodowsky-Sz$\acute{\text{a}}$sz-Kantorovich type operators. Mathematical Communications. 2016 May 16;21(1):31-44.
\bibitem{GC} Jain GC. Approximation of functions by a new class of linear operators. Journal of the Australian Mathematical Society. 1972 May;13(3):271-6.
\bibitem{SMV} Mazhar S, Totik V. Approximation by modified Sz$\acute{\text{a}}$sz operators. Acta Scientiarum Mathematicarum. 1985 Jan 1;49(1-4):257-69.
\bibitem{WM1} Meyer-K$\ddot{\text{o}}$nig W, Zeller K. Bernsteinsche potenzreihen. Studia Mathematica. 1960;19(1):89-94.

\bibitem{VNR1} Mishra VN and  Gandhi RB. Simultaneous approximation by Sz$\acute{\text{a}}$sz-Mirakjan-Stancu-Durrmeyer type operators. Periodica Mathematica Hungarica. 2017 Mar 1;74(1):118-27. https://doi.org/10.1007/s10998-016-0145-0.

\bibitem{VNR2} Mishra VN, Gandhi RB,  Nasaireh F. Simultaneous approximation by Sz$\acute{\text{a}}$sz-Mirakjan-Durrmeyer-type operators. Bollettino dell'Unione Matematica Italiana. 2016 Jan 1;8(4):297-305.

\bibitem{RY2} Mishra VN, Yadav R. Some estimations of summation-integral-type operators. Tbilisi Mathematical Journal. 2018;11(3):175-91.
\bibitem{GM1} Mirakjan G.  Approximation des fonctions continues au moyen polyn$\Hat{\text{o}}$mial de la forme, Dokl. Akad. Nauk. SSSR 31, (1941) 201-205.

\bibitem{VMP1} Mishra VN, Patel P. Some approximation properties of modified Jain-Beta operators. Journal of Calculus of Variations. 2013;2013, 1-8.
\bibitem{PVN1} Patel P, Mishra VN. Jain-Baskakov operators and its different generalization. Acta Mathematica Vietnamica. 2015 Dec 1;40(4):715-33; DOI 10.1007/s40306-014-0077-9.
\bibitem{LR}  Rempulska L and  Walczak Z. Approximation properties of certain modified Sz$\acute{\text{a}}$sz-Mirakyan operators. Le Matematiche. 2000;55(1):121-32.
\bibitem{LRM3}  Rempulska  L, Skorupka M. On Sz$\acute{\text{a}}$sz-Mirakyan operators of functions of two variables. Le Matematiche. 1998 Oct 1;53(1):51-60.

\bibitem{LR} Rempulska L, Graczyk S. On generalized Sz$\acute{\text{a}}$sz-Mirakyan operators of functions of two variables. Mathematica Slovaca. 2012 Feb 1;62(1):87-98.





\bibitem{MS1} Sidharth M, Acu AM, Agrawal PN.   Chlodowsky-Sz$\acute{\text{a}}$sz–Appell-type operators for functions of two variables. Annals of Functional Analysis. 2017;8(4):446-59.
\bibitem{OS1} Sz$\acute{\text{a}}$sz O. Generalization of S. Bernstein's polynomials to the infinite interval. J. Res. Nat. Bur. Standards. 1950 Sep;45:239-45.

\bibitem{DS1}  Stancu DD. Approximation of functions by a new class of linear polynomial operators. Rev. Roumaine Math. Pures Appl. 1968;13(8):1173-94.
\bibitem{ST} Tarabie S. On Jain-beta linear operators. Appl. Math. Inf. Sci. 2012 May 1;6(2):213-6.
\bibitem{VT}  Totik V.  Uniform approximation by Sz$\acute{\text{a}}$sz-Mirakjan type operators. Acta Mathematica Hungarica 1983, 41:291-307.
\bibitem{SU}  Umar S, Razi Q.  Approximation of function by a generalized Sz$\acute{\text{a}}$sz operators. Commun. Fac. Sci. L'Univ D'Ankara : Math$\acute{\text{e}}$matique 1985;34:45-52.
\bibitem{BGJ1} Vecchia BD, Mastroianni G and  Szabados J. Weighted approximation of functions by Sz$\acute{\text{a}}$sz-Mirakyan-type operators. Acta Mathematica Hungarica. 2006 Jun 1;111(4):325-45.

\bibitem{BGJ2} Vecchia BD, Mastroianni G,  Szabados J. A weighted generalization of Sz$\acute{\text{a}}$sz-Mirakyan and Butzer operators. Mediterranean Journal of Mathematics. 2015 May 1;12(2):433-54. DOI: 10.1007/s0009-014-0413-2.
\bibitem{DZ}  Zhou D.  Weighted approximation by Sz$\acute{\text{a}}$sz-Mirakjan operators. Journal of Approximation Theory 1994; 76:393-402.

\bibitem{RYVN3} Mishra VN, Yadav R. Construction of Sz$\acute{\text{a}}$sz-Mirakjan-type operators which preserve $a^x, a>1$, submitted.
















\end{thebibliography}
\end{document}